\let\@wraptoccontribs\wraptoccontribs
\newtheorem{theorem}{Theorem}[section]
\newtheorem{prop}[theorem]{Proposition}
\newtheorem{proposition}[theorem]{Proposition}
\newtheorem{corollary}[theorem]{Corollary}
\newtheorem{example}[theorem]{Example}
\theoremstyle{definition}
\theoremstyle{remark}
\newtheorem{remark}[theorem]{Remark}
\newtheorem*{claim*}{Claim}
\newcommand{\bd}{{\partial}}
\newcommand{\reals}{{\mathbb R}}
\newcommand{\proj}{{\mathbb P}}
\newcommand{\mC}{{\mathcal{C}}}
\newcommand{\mR}{{\mathcal{R}}}
\newcommand{\vol}{{\rm vol}}
\newcommand{\cN}{{\mathcal{N}}}
\newcommand{\cD}{{\mathcal{D}}}
\newcommand{\cH}{{\mathcal{H}}}
\renewcommand{\part}[2]{\frac{\partial #2}{\partial x_{#1}}}
\numberwithin{equation}{section}
\begin{document}

\title[Conformal invariants II. Manifolds with boundary]{Conformal invariants from 
nodal sets II. Manifolds with boundary}
 
\author[G. Cox]{Graham Cox}
\address{Department of Mathematics and Statistics, Memorial University of Newfoundland, St. John's, NL A1C 5S7, Canada}
\email{gcox@mun.ca}

\author[D. Jakobson]{Dmitry Jakobson}
\address{Department of Mathematics and
Statistics, McGill University, Montr\'eal, Ca\-na\-da.}
\email{jakobson@math.mcgill.ca}

\author[M. Karpukhin]{Mikhail Karpukhin}
\address{Department of Mathematics,
University of California, Irvine, 340 Rowland Hall, Irvine, CA 92697-3875
}
\email{mkarpukh@uci.edu}

\author[Y. Sire]{Yannick Sire} 
\address{Department of Mathematics, Johns Hopkins University, 404 Krieger Hall
3400 N. Charles Street, Baltimore, MD 21218}
\email{sire@math.jhu.edu}

\keywords{Spectral geometry, conformal geometry, nodal sets, manifolds with boundary}

\subjclass[2010]{58J50, 53A30, 53A55, 53C21}

\thanks{G.C. acknowledges the support of NSERC grant RGPIN-2017-04259. 
D.J.\ was supported by NSERC and FQRNT grants and Peter Redpath Fellowship of McGill 
University (Canada).  
M.K.\ was supported by Schulich Fellowship of McGill University (Canada) at initial stages of this project.}

\begin{abstract}
In this paper, we study conformal invariants that arise from nodal sets and negative 
eigenvalues of conformally covariant operators on manifolds with boundary.  We also 
consider applications 
to curvature prescription problems on manifolds with boundary.  We relate Dirichlet and 
Neumann eigenvalues and put the results developed here for the Escobar problem into 
the more general framework of boundary operators of arbitrary order. 
\end{abstract}

\maketitle
\tableofcontents

\section{Introduction} 

The Yamabe problem on a closed Riemannian manifold is to find a metric of constant scalar curvature 
in a given conformal class; it generalizes the uniformization theorem for Riemann surfaces.  
It was solved by Yamabe, Trudinger, Aubin and Schoen.  
A related problem is to {\em prescribe} scalar curvature in a given conformal class; the two-dimensional 
version (prescribing Gauss curvature on a surface) is the {\em Nirenberg problem.}  The 
problem of prescribing scalar curvature {\em without} fixing a conformal class was solved by Kazdan 
and Warner \cite{KW75} (the solution is known as the {\em trichotomy theorem}).  Fixing a conformal class makes the 
problem more difficult.  

The conformal Laplacian appears naturally in the study of the Yamabe and curvature prescription 
problems.  Nonzero eigenfunctions in its kernel provide obstructions to prescribing curvature; 
see \cite{CGJP1,CGJP2} for precise statements.  In addition to the conformal Laplacian, 
in the papers \cite{CGJP1,CGJP2,GHJL} the authors considered general conformally covariant operators 
(including the Paneitz operator), and studied negative and zero eigenvalues 
of  those operators on manifolds without boundary.  They also considered 
applications to the associated curvature prescription problems (including the $Q$-curvature), and
 constructed conformal invariants determined by eigenfunctions in the 
kernel of the corresponding operator.  In the current paper, we aim to generalize 
those results to manifolds 
with boundary.

The Yamabe problem on manifolds with boundary was solved in a series of works by  
Escobar \cite{Escobar:1992:JDG,Escobar:1992:Annals}, Marques 
\cite{Marques}, Chen \cite{Chen} and Mayer-Ndiaye \cite{MN}.  
They proved  
that on any compact manifold with boundary of dimension 
$n\geq 3$, there exists in every conformal class a scalar flat metric in the interior with constant 
mean curvature of the boundary.  

The problem of {\em prescribing} scalar curvature $R_g$ on the manifold $M$ 
and mean curvature $h_g$ on the boundary $\bd M$ was considered 
in \cite{Escobar:2003:JFA} and other papers.  It was shown in \cite[Cor. 2]{Escobar:2003:JFA} 
that the solution attaining $(R_g,h_g)$ is unique in the conformal class of $[g]$ if $R_g\leq 0$ and $h_g\leq 0$.

In the current paper we consider general conformally covariant operators on manifolds with 
boundary, and generalize the conformal invariants constructed in \cite{CGJP1,CGJP2} to 
this setting.  We extend  the results of \cite{GHJL}, showing that $0$ is generically 
{\em not} an eigenvalue in certain natural boundary value problems.  We also answer 
some natural questions about the number of negative eigenvalues for conformally covariant 
boundary value problems.

\subsection*{Main results}
We summarize below several important results of the present paper. 
The first theorem extends the main result of \cite{GHJL} to manifolds with boundary: 

\begin{theorem}\label{not0}
Generically, $0$ is not an eigenvalue of the conformal Laplacian on a manifold with boundary.
\end{theorem}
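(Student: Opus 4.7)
The plan is to extend the Baire category argument of \cite{GHJL} to the boundary setting, with the central new difficulty being the choice and handling of a conformally covariant boundary condition. The natural companion of $L_g = -\Delta_g + \frac{n-2}{4(n-1)} R_g$ is the Robin-type operator $B_g u = \partial_\nu u + \frac{n-2}{2} H_g u$, where $H_g$ is the mean curvature of $\partial M$; the pair $(L_g, B_g)$ is the elliptic, self-adjoint boundary value problem whose sign-changing and zero eigenvalues are conformal invariants in the sense of the earlier sections. Its associated Dirichlet form is
$$Q_g(u,v) = \int_M \bigl(\langle \nabla u, \nabla v\rangle + \tfrac{n-2}{4(n-1)} R_g\, uv\bigr)\,dv_g + \int_{\partial M} \tfrac{n-2}{2} H_g\, uv\,d\sigma_g,$$
so the spectrum is discrete, real, and accumulates only at $+\infty$.

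Let $\mathcal{M}$ be the space of smooth metrics with its Fr\'echet $C^\infty$ topology, and set $Z_k = \{g : \dim\ker(L_g, B_g)\geq k\}$. The set $Z_k$ is closed by lower semicontinuity of the spectrum, and the complement of $Z_1 = \bigcup_k Z_k$ is an open set; the claim reduces to showing $Z_1$ is nowhere dense, i.e.\ that every $g_0 \in Z_1$ can be perturbed out of $Z_1$. By Kato--Rellich type analytic perturbation theory for self-adjoint elliptic boundary value problems, along any real-analytic path of metrics the small eigenvalues and their spectral projections depend real-analytically on the parameter, so it suffices to exhibit one analytic direction along which $0$ leaves the spectrum.

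The key step is the following first-order computation. Given $g_0\in Z_1$ with an orthonormal basis $u_1,\dots,u_k$ of $\ker(L_{g_0}, B_{g_0})$, consider $g_t = g_0 + th$ and form the symmetric matrix
$$M_{ij}(h) = \frac{d}{dt}\Big|_{t=0} Q_{g_t}(u_i, u_j),$$
expressed via the standard variation formulas for $\Delta_g$, $R_g$, $H_g$ and the volume forms; this matrix governs the leading behavior $\lambda_j(t) = t\mu_j(h) + O(t^2)$ of the eigenvalues emerging from $0$. If $M(h)$ is nondegenerate then all such eigenvalues move off $0$ for small $t\neq 0$, giving $g_t\notin Z_1$. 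The conformal covariance of $(L_g, B_g)$ forces $M(h) \equiv 0$ whenever $h = 2\phi g$ is a pure conformal direction, so one must extract the traceless part of $h$; the resulting bilinear pairing is a nontrivial linear functional on traceless symmetric 2-tensors provided the $u_i$ (and their derivatives at $\partial M$) are not all identically degenerate on any open set, which follows from Aronszajn-type unique continuation applied to the solutions of the elliptic problem $(L_{g_0}, B_{g_0})u_i = 0$. A standard Sard-type argument then shows that the set of $h$ for which $M(h)$ is nondegenerate is residual.

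The main obstacle is the careful bookkeeping of the boundary contribution in the variation, together with the verification that the traceless part of the perturbation supplies enough transverse freedom to achieve nondegeneracy. In particular, since conformal directions are invisible to $M$, one cannot simply perturb by a scalar conformal factor (as in the closed case of \cite{GHJL} this issue is present but absorbed into the same framework); for manifolds with boundary, unique continuation must be applied up to and along $\partial M$, relying on the Robin-type structure of $B_g$, and the boundary integrand of $M(h)$ must be shown to be consistent with the interior one under the constraint $B_{g_0} u_i = 0$.
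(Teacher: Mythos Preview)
Your overall framework---the Robin-type boundary operator $B_g$, the associated quadratic form $Q_g$, analytic perturbation theory for the eigenvalue branches, and the first-variation matrix $M_{ij}(h)$---matches the paper's setup. However, you miss the paper's central simplification, and as a result you are working much harder than necessary while leaving the genuinely delicate steps unjustified.

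The paper's key observation is this: choose the perturbation $\tilde g$ to be \emph{compactly supported in the interior of $M$}. Then the boundary condition $B_g u = 0$ is unaffected, the variation of $H_g$ and of the boundary measure vanish, and the first-variation computation reduces \emph{verbatim} to the closed-manifold calculation in \cite{GHJL}. One obtains, for traceless $\tilde g$,
\[
(Q_{g,\tilde g}\psi,\psi) = \bigl\langle \tilde g,\; c_n\psi\bigl(2\,\mathrm{Hess}^{\circ}\psi - \psi\,\mathrm{Ric}^{\circ}\bigr) + (2c_n-1)(d\psi\otimes d\psi)^\circ \bigr\rangle,
\]
with no boundary term whatsoever. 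If the tensor $q$ on the right is nonzero at some interior point, one localizes $\tilde g$ there and is done. If $q\equiv 0$, one restricts to the interior nodal set of $\psi$ (nonempty because, after normalizing to $R_g\equiv -1$, $h_g\equiv 0$, $\psi$ is a genuine Laplace eigenfunction orthogonal to constants) and deduces $d\psi=0$ along that set, contradicting the known regularity of critical sets of elliptic solutions \cite{Ha,HHL,HHHN}. There is no need for unique continuation up to or along $\partial M$, and no boundary variation bookkeeping.

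By contrast, your sketch commits to tracking the boundary contributions in $M_{ij}(h)$ and invoking unique continuation at $\partial M$, but you do not actually carry this out; the phrase ``a nontrivial linear functional on traceless symmetric 2-tensors provided the $u_i$ are not all identically degenerate'' is not a proof, and the ``standard Sard-type argument'' for nondegeneracy of $M(h)$ is asserted rather than shown. Note also that the paper does not attempt to make $M(h)$ nondegenerate in one stroke: it only shows $Q_{g,\tilde g}\not\equiv 0$, which drops $\dim\ker$ by at least one, and then iterates. Aiming for full nondegeneracy, as you do, is strictly harder and would require showing surjectivity of $h\mapsto M(h)$ onto symmetric matrices, which you have not addressed.
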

The proof follows easily from Proposition \ref{generic:thm} of Section \ref{sec:no:zero} 
by methods of \cite{GHJL}.  

The next result shows that the number of negative eigenvalues of the 
conformal Laplacian on manifolds with boundary can be made arbitrarily large: 
\begin{theorem}\label{thm:many:negative}
For any compact manifold with boundary and any natural number $m$, there exists a 
conformal class $\mC$ such that for any metric $g\in \mC$ one has 
$\lambda_m(P_{1,g}^R)<0$, where the operator $P_{1,g}^R$ is defined below in \eqref{ConfN:def}. 
\end{theorem}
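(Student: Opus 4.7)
The argument has two conceptual pieces: a conformal invariance of the negative-eigenvalue count of $P_{1,g}^R$, and the construction of a single metric $g_0$ exhibiting $m$ negative eigenvalues.

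\emph{Step 1 (conformal invariance).} The key observation is that for any conformal change $\tilde g = u^{4/(n-2)} g$ with $u>0$, the conformal covariance of the conformal Laplacian together with its Robin boundary operator gives, under the substitution $\tilde\phi = u^{-1}\phi$, the identity $Q_{\tilde g}(\tilde\phi) = Q_g(\phi)$, where $Q_g$ is the quadratic form attached to $P_{1,g}^R$ (interior plus boundary term). The $L^2$ norm transforms with a strictly positive weight: $\|\tilde\phi\|_{L^2(\tilde g)}^2 = \int_M \phi^2\, u^{4/(n-2)}\, dv_g$. Consequently the eigenvalue problem for $P_{1,\tilde g}^R$ is equivalent to a weighted eigenvalue problem for $P_{1,g}^R$ with strictly positive weight, and the signature—in particular the dimension of the negative eigenspace—is preserved. (If $0$ should happen to lie in the spectrum, a small conformal perturbation removes it by Theorem \ref{not0}.) Hence it suffices to exhibit a single metric $g_0$ on $M$ with $\lambda_m(P_{1,g_0}^R) < 0$, and take $\mC := [g_0]$.

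\emph{Step 2 (geometric construction).} Fix $m$ pairwise disjoint closed balls $B_1,\dots,B_m$ in the interior of $M$. Starting from any background metric $g_1$, modify it in each $B_i$—leaving a collar of $\partial M$ untouched—to obtain a smooth metric $g_0$ with $R_{g_0} \le -K$ on each $B_i$, where $K$ is a large parameter to be chosen. This can be accomplished by grafting a small hyperbolic cap or by a localized Kazdan--Warner-type prescription in the interior. Choose bump functions $\phi_i \in C_c^\infty(\mathrm{int}\,B_i)$ with $\|\phi_i\|_{L^2(g_0)} = 1$ and $\|\nabla\phi_i\|_{L^2(g_0)}^2 \le C$ for a constant $C$ independent of $K$. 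Since $\phi_i$ vanishes near $\partial M$, the boundary term in $Q_{g_0}$ drops out, giving
\[
Q_{g_0}(\phi_i) \;\le\; C \;-\; \frac{(n-2)\,K}{4(n-1)},
\]
which is strictly negative once $K$ is sufficiently large. The $\phi_i$ have pairwise disjoint supports, so they span an $m$-dimensional subspace of $H^1(M)$ on which $Q_{g_0}$ is negative definite. The min-max characterization of the spectrum of $P_{1,g_0}^R$ then forces $\lambda_m(P_{1,g_0}^R) < 0$.

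\emph{Main obstacle.} The technical heart of the argument is the metric surgery of Step~2: producing metrics with arbitrarily negative scalar curvature in prescribed disjoint interior balls while preserving $\partial M$ and keeping the cutoff gradients uniformly bounded. This parallels the closed-manifold constructions of \cite{CGJP1,CGJP2}; the only new point for boundary manifolds is to carry out the surgery strictly in the interior, which decouples the construction from the Robin boundary data. In dimensions $n \ge 3$, where $P_{1,g}^R$ is the standard Escobar-type operator, this is routine; in dimension $2$ the conformal Laplacian is simply $-\Delta$ and the statement must be interpreted via the corresponding boundary operator of order $1$, where analogous constructions go through using boundary mean curvature in place of scalar curvature.
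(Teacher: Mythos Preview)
Your overall architecture matches the paper's: the negative index of $P_{1,g}^R$ is a conformal invariant (your Step~1; in the paper this is Proposition~\ref{prop:nodal-inv-bdry}(2)), so it suffices to build one metric with an $m$-dimensional negative subspace by planting $m$ disjoint ``bad balls'' in the interior and using compactly supported test functions. The gap is in how you manufacture a single bad ball.

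You assert that one can modify the metric inside $B_i$ so that $R_{g_0}\le -K$ there while $\|\nabla\phi_i\|_{L^2(g_0)}^2\le C$ with $C$ independent of $K$, forcing $Q_{g_0}(\phi_i)\le C-c_nK<0$. Neither mechanism you name delivers this. A hyperbolic cap cannot work in principle: a ball in hyperbolic space is conformal (via the Poincar\'e model) to a Euclidean ball, on which $P_1=\Delta$ has strictly positive Dirichlet spectrum, and positivity of $\lambda_1^D(P_1)$ is a conformal invariant of the ball. Thus no matter how negative you drive $R$ by increasing the hyperbolic curvature, the kinetic term $\int|\nabla\phi|^2$ compensates exactly and the Rayleigh quotient stays positive. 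A Kazdan--Warner prescription does produce, for each $K$, \emph{some} metric with $R\equiv -K$, but gives no uniform geometric control as $K\to\infty$; the $K$-independence of $C$ is precisely what is not supplied. Scaling a fixed negatively curved metric merely rescales the whole Rayleigh quotient without changing its sign. The phenomenon you are fighting is real: very negative scalar curvature by itself does not force $\lambda_1^D(P_1)<0$.

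The paper sidesteps this by not sending $R\to-\infty$ at all. It invokes Aubin's theorem that $S^n$ carries a metric $g$ with negative Yamabe invariant, which directly yields a function $u$ with $R_g[u]<0$. One then cuts $u$ off near a point (using that points have zero capacity in dimension $n\ge 3$, so $\|\nabla\varphi_\delta\|_{L^2}^2\le\delta$) and transplants the pair $(g,u\varphi_\delta)$ into each $U_i\cong S^n\setminus\{p\}$. No large parameter is needed; negativity of the Rayleigh quotient comes from the conformal class produced by Aubin, not from a curvature blow-up. Your argument becomes correct once the ``$R\le-K$ with uniform gradient bound'' step is replaced by this input, or by any other explicit source of a metric on a ball with $\lambda_1^D(P_{1,g})<0$. (Your aside on $n=2$ is moot: the paper works in $n\ge3$ throughout.)
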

The proof in section \ref{sec:many:negative} uses the ideas of El Sayed~\cite{Elsayed}.

The next result provides an obstruction to prescribing curvature in a conformal class 
$[g]$ with non-trivial $\ker(P_{1,g}^R)$, formulated as an inequality for a {\em single 
nodal domain} of a function $u\in\ker(P_{1,g}^R)$.  
\begin{theorem}\label{obstructionNodal}
Let $(Q,f)\in \mR([g])$ be the scalar curvature and the mean curvature of some metric in 
the conformal class $[g]$. Then, there is a pair of positive functions $\omega_i\in 
C^\infty(M)$, $\omega_b\in C^\infty(\bd M)$ such that for any nonzero $u\in\ker(P_{1,g}^R)$ 
and any nodal domain $\Omega$ of $u$,
$$
\int\limits_\Omega Q|u|\omega_i \,dv_g + \int\limits_{\bd\Omega\cap\bd M}f|u|\omega_b\,d\sigma_g<0.
$$
\end{theorem}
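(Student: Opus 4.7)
\emph{Strategy.} The plan is to reduce the claim to Green's second identity applied to $u$ and to a positive conformal factor realizing the prescribed curvatures. Since $(Q,f)\in\mR([g])$, pick $\omega\in C^\infty(M)$ with $\omega>0$ so that $\tilde g=\omega^{4/(n-2)}g$ has $R_{\tilde g}=Q$ and $h_{\tilde g}=f$. Conformal covariance of the pair $(P_{1,g},B_g)$, where $B_g=\partial_\nu+\tfrac{n-2}{2}h_g$, translates these into the pointwise identities
$$P_{1,g}\omega=c_n Q\,\omega^{(n+2)/(n-2)},\qquad B_g\omega=d_n f\,\omega^{n/(n-2)},$$
with positive dimensional constants $c_n,d_n$. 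I would therefore set $\omega_i:=c_n\omega^{(n+2)/(n-2)}$ and $\omega_b:=d_n\omega^{n/(n-2)}$, both of which are automatically smooth and strictly positive.

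\emph{Main identity.} Fix a nodal domain $\Omega$ and, after replacing $u$ by $-u$ if needed, assume $u>0$ on $\Omega$. Since $P_{1,g}=-\Lap_g+c_n R_g$ and the zeroth-order terms cancel in the commutator, Green's identity gives
$$\int_\Omega\bigl(\omega P_{1,g}u-u P_{1,g}\omega\bigr)\,dv_g=\int_{\bd\Omega}\bigl(u\,\partial_\nu\omega-\omega\,\partial_\nu u\bigr)\,d\sigma_g.$$
Using $P_{1,g}u=0$ together with the formula for $P_{1,g}\omega$, the left-hand side collapses to $-\int_\Omega Q|u|\omega_i\,dv_g$. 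I then split $\bd\Omega=(\bd\Omega\cap\bd M)\cup(\bd\Omega\setminus\bd M)$. On $\bd\Omega\cap\bd M$, the condition $B_g u=0$ gives $\partial_\nu u=-d_n h_g u$, while the formula for $B_g\omega$ gives $\partial_\nu\omega=-d_n h_g\omega+d_n f\,\omega^{n/(n-2)}$; the $h_g$-contributions to $u\,\partial_\nu\omega-\omega\,\partial_\nu u$ cancel and there remains exactly $d_n f u\,\omega^{n/(n-2)}=f|u|\omega_b$. On $\bd\Omega\setminus\bd M$ only $-\omega\,\partial_\nu u$ survives since $u\equiv0$ there, and the outward derivative satisfies $\partial_\nu u\le 0$ because $u>0$ just inside $\Omega$. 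Rearranging everything, one obtains
$$\int_\Omega Q|u|\omega_i\,dv_g+\int_{\bd\Omega\cap\bd M}f|u|\omega_b\,d\sigma_g\;=\;\int_{\bd\Omega\setminus\bd M}\omega\,\partial_\nu u\,d\sigma_g\;\le\;0.$$

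\emph{Strict inequality and main obstacle.} The heart of the argument is upgrading $\le$ to $<$, i.e.\ showing $\partial_\nu u<0$ on a subset of $\bd\Omega\setminus\bd M$ of positive surface measure. By interior elliptic regularity for $P_{1,g}$, the nodal set of $u$ has an open, dense subset of regular points (where $\nabla u\neq 0$), and at any such point the Hopf boundary-point lemma applied inside $\Omega$ yields $\partial_\nu u<0$. This is the main technical step; the remaining delicate ingredient is the algebraic cancellation of the $h_g$-terms on $\bd\Omega\cap\bd M$, which is what allows the final expression to involve only $(Q,f)$ and no residual mean-curvature contribution. One implicitly needs $\bd\Omega\setminus\bd M\neq\emptyset$: the degenerate case $\Omega=M$ would force $u$ itself to induce a scalar-flat, minimal-boundary representative of $[g]$ and would produce only equality in the displayed inequality.
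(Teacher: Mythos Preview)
Your argument is correct and is essentially the paper's proof: the paper first records the Green-identity step as a separate lemma (their Theorem~\ref{nodal1}) and then applies it with $v=1$ in the conformally changed metric $\hat g$ realizing $(Q,f)$, whereas you apply Green's identity directly in $g$ with the positive conformal factor $\omega$ as test function --- these are the same computation by conformal covariance of $(P_{1,g},B_g)$. Your observation about the degenerate case $\Omega=M$ is well taken and is not addressed in the paper's proof; in that situation the integral over $\bd\Omega\setminus\bd M$ is vacuous and one only obtains $\le 0$.
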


Finally, Theorem \ref{prop:negative:eig:count} (the main result in Section \ref{sec:DNinequalities}) 
provides a generalization of a result in 
\cite{Friedlander} to conformally covariant operators, relating the number of negative 
eigenvalues for the Dirichlet and Robin problems to the number of negative eigenvalues for a 
conformally covariant version of the Dirichlet-to-Neumann map.

\subsection*{Outline of the paper.}

In section \ref{sec:BVP} we give examples of conformally covariant boundary value 
problems and describe natural extensions of the results in 
\cite{CGJP1,CGJP2} to manifolds with boundary.  
In section \ref{sec:no:zero}, we show that generically the conformal Laplacian on manifolds 
with boundary does not have the eigenvalue $0$.  In section \ref{sec:many:negative}, we show 
that the number of negative eigenvalues of the conformal Laplacian on manifolds with 
boundary can be arbitrarily large.  In section \ref{sec:curvature:prescription} 
we explore some natural curvature prescription problems (in a given conformal class) on manifolds 
with boundary;  we also study the relationship between nodal sets of eigenfunctions in the kernel of the 
conformal Laplacian in the interior of the manifold, 
and the corresponding nodal sets on the boundary.  
In section \ref{sec:DNinequalities}, we study inequalities between Dirichlet and 
Robin eigenvalues, and establish Friedlander-type results in the conformal setting.  
In section \ref{sec:Poincare:Einstein} we study a continuous family of GJMS operators 
and their conformal extensions to the boundary of Poincare-Einstein manifolds.  
In section \ref{sec:further:questions} we outline some natural questions that we have not considered 
in this paper.


\section{Conformally covariant boundary value problems}\label{sec:BVP}

A key observation used in \cite{CGJP1,CGJP2} (and many other papers) is that 
eigenfunctions that belong to the kernel of a conformally covariant operator 
(on a manifold without boundary) are multiplied by a positive function under 
a conformal change of the metric.  As a consequence, their nodal sets and 
nodal domains are preserved; other consequences of this fact were explored 
in \cite{CGJP1,CGJP2}.  

Here we consider a manifold $M$ (of dimension $n\geq 3$) 
with smooth boundary $\bd M$.
Given a conformally covariant 
operator $P$ on $M$, we consider eigenfunctions $P\phi = \lambda\phi$ 
on $M$.  
We are interested in boundary value problems for $P$ where eigenfunctions 
corresponding to $\lambda=0$ transform 
in the same way as when $\bd M=\emptyset$.

We start with a discussion of the conformal Laplacian,
\begin{equation}\label{def:conflap}
P_{1,g}=\Delta_g+\frac{n-2}{4(n-1)}R_g, 
\end{equation}
where $\Delta_g$ is the positive-definite Laplacian for $g$, and $R_g$ is the scalar 
curvature.  If $\hat g=e^{2\omega}g$ is a metric in the conformal class 
$[g]$, then $P_{1,g}$ transforms according to the law 
$$
P_{1,\hat g}=e^{-(n/2+1)\omega}P_{1,g} e^{(n/2-1)\omega}.
$$
Accordingly, 
\begin{equation}\label{kernel:change}
\ker P_{1,\hat g}=e^{-(n/2-1)\omega}\ker P_{1,g}.
\end{equation} 

Consider now a boundary value problem for $P_{1,g}$ on $(M,\bd M)$.  In the interior 
we have $P_{1,g}\phi = \lambda\phi$.  Next, we need to specify 
boundary conditions on $\bd M$.  We would like the transformation formula 
\eqref{kernel:change} to hold for eigenfunctions with $\lambda=0$.  

Standard boundary conditions include {\em Dirichlet BC} 
\begin{equation}\label{Dirichlet:def}
\left\{
\begin{aligned}
P_{1,g} \phi(x) &=\lambda\phi(x), \quad x\in M\\
\phi(x) &= 0, \quad x\in \bd M
\end{aligned}
\right.
\end{equation}
and {\em Neumann BC} 
\begin{equation}\label{Neumann:def}
\left\{
\begin{aligned}
P_{1,g} \phi(x) &=\lambda\phi(x), \quad x\in M\\
\bd_\nu\phi(x) &= 0, \quad x\in \bd M.  
\end{aligned}
\right.
\end{equation}
Here $\nu = \nu_g$ is the unit (with respect to the metric $g$) outer normal vector field on $\bd M$. We remark that the direction of $\nu$ is preserved under conformal transformations.

\begin{example}
If $0$ is an eigenvalue of $P_{1,g}$ on a closed manifold, then it is an eigenvalue of the Dirichlet problem on any nodal domain of the corresponding eigenfunction.
\end{example}

We next state the first result about boundary value problems  
\begin{prop}\label{conformal:BC}
Let $\hat g=e^{2\omega}g$ be two metrics on $M$ lying in the 
same conformal class.
\begin{itemize}
\item[(a)] Solutions of the Dirichlet BVP \eqref{Dirichlet:def} with $\lambda=0$ 
transform under the rule \eqref{kernel:change}.
\item[(b)] Solutions of the Neumann BVP \eqref{Neumann:def} with $\lambda=0$ 
transform under the rule \eqref{kernel:change},
provided $\omega$ satisfies $\bd_\nu\omega(x)=0$ for all $x\in\bd M$.    
\end{itemize}
\end{prop}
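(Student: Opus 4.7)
The plan is to apply the conformal transformation law $P_{1,\hat g}=e^{-(n/2+1)\omega}P_{1,g}\,e^{(n/2-1)\omega}$ recorded in the paragraph preceding \eqref{kernel:change}, and then simply verify that the relevant boundary condition is preserved under the substitution $\hat\phi:=e^{-(n/2-1)\omega}\phi$. Since this substitution is a bijection on smooth functions (its inverse is obtained by swapping $\omega$ for $-\omega$), it is enough to show one direction in each case.

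For part (a), the interior equation comes for free: whenever $P_{1,g}\phi=0$,
$$
P_{1,\hat g}\hat\phi = e^{-(n/2+1)\omega}P_{1,g}\,e^{(n/2-1)\omega}\cdot e^{-(n/2-1)\omega}\phi = e^{-(n/2+1)\omega}P_{1,g}\phi = 0.
$$
The Dirichlet condition is then preserved trivially, since $e^{-(n/2-1)\omega}$ is everywhere positive; thus $\hat\phi|_{\partial M}=0$ if and only if $\phi|_{\partial M}=0$.

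For part (b) the interior equation is handled in exactly the same way; the substance of the argument is the Neumann condition. Under $\hat g=e^{2\omega}g$, lengths rescale by $e^\omega$, so the outer unit normal transforms as $\hat\nu=e^{-\omega}\nu$ (the direction is preserved, as noted right after \eqref{Neumann:def}). The product rule then gives
$$
\partial_{\hat\nu}\hat\phi = e^{-\omega}\partial_\nu\bigl(e^{-(n/2-1)\omega}\phi\bigr) = e^{-(n/2)\omega}\Bigl[\partial_\nu\phi - \tfrac{n-2}{2}(\partial_\nu\omega)\,\phi\Bigr].
$$
The first bracketed term vanishes by the Neumann condition on $\phi$; the second, which is the sole obstruction, vanishes precisely under the hypothesis $\partial_\nu\omega|_{\partial M}=0$ assumed in (b).

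I do not anticipate a real obstacle: the entire argument reduces to two known transformation laws (for $P_{1,g}$ and for the unit normal) followed by a one-line product-rule computation. The only conceptually noteworthy point is that the extra hypothesis $\partial_\nu\omega|_{\partial M}=0$ is \emph{forced} by the cross term $(\partial_\nu\omega)\phi$ above; this is exactly the obstruction that later motivates introducing the Robin-type modification $P_{1,g}^R$ of the Neumann problem, in order to restore conformal covariance for an arbitrary conformal factor $\omega$.
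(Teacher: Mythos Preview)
Your argument is correct and follows essentially the same route as the paper's proof: apply the conformal transformation law for $P_{1,g}$ in the interior, then check the boundary condition via the product rule, noting that the cross term $(\partial_\nu\omega)\phi$ is killed by the hypothesis $\partial_\nu\omega|_{\partial M}=0$. You are in fact slightly more careful than the paper, since you explicitly account for the rescaling $\hat\nu=e^{-\omega}\nu$ of the unit normal (the paper only computes $\partial_\nu\hat\phi$, which suffices since the positive factor $e^{-\omega}$ does not affect vanishing).
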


\begin{proof}  The condition \eqref{Dirichlet:def} is clearly invariant 
under the transformation \eqref{kernel:change}.  For the condition 
\eqref{Neumann:def}, we remark that 
$$
\bd_\nu\left(e^{-(n/2-1)\omega}\phi\right)=
e^{-(n/2-1)\omega}\left((1-n/2)\bd_\nu\omega+\bd_\nu\phi\right)
$$
and both expressions in the bracket vanish if $\bd_\nu\omega=0$.  
\end{proof}

It follows that choosing boundary conditions as in Proposition 
\ref{conformal:BC} allows one to develop the theory analogous to 
that in \cite{CGJP1,CGJP2}. In particular, the following Proposition can be proved easily by the methods of 
\cite{CGJP1}.  
\begin{proposition}\label{prop:nodal-inv-bdry}
Let $P$ be a conformally covariant operator on $(M,\bd M)$ whose kernel transforms 
according to \eqref{kernel:change}.  Let $g$ be a Riemannian metric such that 
$\dim \ker P_g\geq 1$.  Then the following quantities give rise 
to invariants of the conformal class $[g]$.    
\begin{enumerate}
\item The dimension $k$ of $\ker P_g$  
\item The number of negative eigenvalues of $P_g$  
\item Nodal sets $\cN(u)$ and nodal domains of any nonzero eigenfunction $u\in\ker P_{g}$ 
\item Non-empty intersections of nodal sets of eigenfunctions in $\ker P_g$ and their 
complements, provided $k\geq 2$
\item Let $k\geq 2$, and let $(u_1,\ldots,u_k)$ be a basis of $\ker P_g$.  
Let $\widetilde{M}=M\setminus\left(\cap_{i=1}^k  \cN(u_i)\right)$.  Define  
$\Phi_g:\widetilde{M}\to \reals\proj^{k-1}$  by 
$\Phi_g(x)=(u_1(x):\ldots :u_k(x))$.  Then the orbit of $\Phi_g(\widetilde{M})$ in 
$\reals\proj^{k-1}$ under the action of $GL_k(\reals)$ is conformally invariant.  
\item The same results hold for $\bd M$ and the restrictions of eigenfunctions to $\bd M$.  
\item Let $p=2n/(n-2)$, and let $u\in\ker P_g$.  Then the density $|u(x)|^pd\vol(x)$ is conformally invariant.  
In particular, the following quantities are conformally invariant: $||u||_p$, as well as 
$\int_\Omega |u(x)|^pd\vol(x)$, where $\Omega$ is any domain in $M$.  Similar results hold 
for restrictions of $u$ to submanifolds $N$ of $M$, with suitably adjusted exponents $p=p(N)$.  
\end{enumerate}
\end{proposition}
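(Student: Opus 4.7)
The common thread in all seven items is the hypothesis that $\ker P_{\hat g} = \varphi \cdot \ker P_g$, where $\varphi := e^{-(n/2-1)\omega}$ is a strictly positive smooth function on $M$. Multiplication by $\varphi$ is therefore a linear isomorphism between the kernels, which immediately yields item (1). Since $\varphi > 0$ pointwise, the zero set of $\varphi u$ equals the zero set of $u$, so nodal sets and hence nodal domains are unchanged as subsets of $M$; this gives item (3), and item (4) then follows by taking set-theoretic intersections and complements. Item (6) is proved by the identical argument, using that $\varphi|_{\bd M}$ remains a strictly positive smooth function and that restriction to $\bd M$ commutes with multiplication by $\varphi$.

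Item (2) requires tracking the quadratic form of $P$ through the conformal change. Using the transformation laws $P_{1,\hat g} = e^{-(n/2+1)\omega} P_{1,g} e^{(n/2-1)\omega}$ and $d\vol_{\hat g} = e^{n\omega} d\vol_g$, a direct calculation gives
\[
\langle P_{1,\hat g}(\varphi u),\, \varphi v\rangle_{\hat g} \;=\; \langle P_{1,g} u,\, v\rangle_g
\]
for all smooth $u,v$ in the relevant (conformally preserved) boundary condition domain. Thus the linear isomorphism $u \mapsto \varphi u$ intertwines the two quadratic forms, and since it is a bijection on the space of admissible functions (by Proposition \ref{conformal:BC}), the min-max principle shows that $P_{1,g}$ and $P_{1,\hat g}$ have the same number of negative eigenvalues. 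This signature argument is the only step requiring care with functional-analytic conventions, and is the main technical point.

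Item (5) is essentially tautological: if $(\hat u_1,\ldots,\hat u_k) = (\varphi u_1,\ldots,\varphi u_k)$, then
$\Phi_{\hat g}(x) = (\varphi(x) u_1(x) : \cdots : \varphi(x) u_k(x)) = (u_1(x) : \cdots : u_k(x)) = \Phi_g(x)$,
since the common nonzero scalar $\varphi(x)$ drops out of the projective coordinates. A different choice of basis for $\ker P_g$ changes $\Phi_g$ by an element of $GL_k(\reals)$ acting on $\reals\proj^{k-1}$, and this action commutes with pointwise positive scaling, so the $GL_k(\reals)$-orbit of the image is intrinsic. For item (7), one computes $|\varphi u|^p\, d\vol_{\hat g} = e^{(-p(n/2-1)+n)\omega}\,|u|^p\, d\vol_g$, which equals $|u|^p d\vol_g$ precisely when $p = 2n/(n-2)$; the invariance of $\|u\|_p$ and of $\int_\Omega |u|^p d\vol_g$ over any domain $\Omega$ then follow by integration. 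The analogous statement on an $m$-dimensional submanifold $N \subset M$ requires the modified exponent $p(N) = 2m/(n-2)$, since the induced measure on $N$ scales by $e^{m\omega}$ while the function still scales by $\varphi$.
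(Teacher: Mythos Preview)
Your argument is correct and is precisely the computation that the paper's one-line reference to ``the methods of \cite{CGJP1}'' is pointing to; the paper itself offers no further proof. One small clarification on item (2): the identity $\langle P_{1,\hat g}(\varphi u),\varphi v\rangle_{\hat g}=\langle P_{1,g}u,v\rangle_g$ shows that the \emph{quadratic forms} are intertwined, and hence their negative indices (equivalently, the number of negative eigenvalues by the variational characterization) agree; the $L^2$ inner products themselves are \emph{not} intertwined by $u\mapsto\varphi u$, so individual eigenvalues do move under conformal change, only their signs are preserved --- your invocation of min-max handles exactly this point, but it is worth saying explicitly.
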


Next, consider the boundary operator $B_g$ given by 
$$B_gu(x) = \bd_{\nu_g} u (x) + 
\frac{n-2}{2(n-1)}h_g u(x),$$
 where $h_g = tr_g II$ is the mean curvature of the boundary. We have

\begin{prop}
\label{confBVP}
The operator $B_g$ is conformally covariant, i.e. 
$$
B_{e^{2\omega}g} \left(e^{(-\frac{n}{2}+1)\omega}u\right) = e^{-\frac{n}{2}\omega}B_g u.
$$
\end{prop}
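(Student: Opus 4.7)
The plan is to reduce the identity to two standard conformal transformation rules and then check that the coefficient $(n-2)/2(n-1)$ in the definition of $B_g$ is exactly what is needed to cancel the derivative of the conformal factor. So the calculation is really forced.

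First I would record the two transformation laws I intend to use. Writing $\hat g = e^{2\omega}g$, the unit outer normal rescales as $\hat\nu = e^{-\omega}\nu$ since lengths are multiplied by $e^\omega$, and the mean curvature (with the convention $h_g = \mathrm{tr}_g II$) obeys
$$
h_{\hat g} \;=\; e^{-\omega}\bigl(h_g + (n-1)\,\partial_\nu\omega\bigr).
$$
The second formula is a one-line consequence of the conformal change of the Levi-Civita connection $\hat\nabla_X Y = \nabla_X Y + (X\omega)Y + (Y\omega)X - g(X,Y)\nabla\omega$ applied to $\hat\nabla_{e_i}\hat\nu$ with a tangential frame $\{e_i\}$; I would either quote this or include the short derivation.

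Next I would plug $\hat u := e^{(-n/2+1)\omega}u$ into $B_{\hat g}$ and expand the two pieces separately. The normal derivative gives, by the Leibniz rule and the rescaling of $\hat\nu$,
$$
\partial_{\hat\nu}\hat u \;=\; e^{-\omega}\Bigl[\bigl(-\tfrac{n}{2}+1\bigr)(\partial_\nu\omega)\,e^{(-n/2+1)\omega}u + e^{(-n/2+1)\omega}\partial_\nu u\Bigr]
\;=\; e^{-(n/2)\omega}\Bigl[\partial_\nu u + \bigl(-\tfrac{n}{2}+1\bigr)(\partial_\nu\omega)\,u\Bigr],
$$
while the mean curvature term becomes
$$
\tfrac{n-2}{2(n-1)}\,h_{\hat g}\,\hat u \;=\; e^{-(n/2)\omega}\Bigl[\tfrac{n-2}{2(n-1)}\,h_g\,u + \tfrac{n-2}{2}\,(\partial_\nu\omega)\,u\Bigr].
$$
Adding the two, the coefficients of $(\partial_\nu\omega)u$ are $(-n/2+1) + (n-2)/2 = 0$, so the $\omega$-derivative terms cancel and one is left with $e^{-(n/2)\omega}B_g u$, as claimed.

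The only real obstacle is getting the mean curvature transformation law right, including the sign convention and the factor $(n-1)$; everything else is bookkeeping. Once that formula is in hand, the cancellation is what the coefficient $(n-2)/(2(n-1))$ is designed to produce, just as in the analogous computation for the interior conformal Laplacian $P_{1,g}$.
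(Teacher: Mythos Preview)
Your proof is correct and follows essentially the same route as the paper: establish the transformation laws $\hat\nu = e^{-\omega}\nu$ and $h_{\hat g} = e^{-\omega}(h_g + (n-1)\partial_\nu\omega)$, then expand $B_{\hat g}(e^{(1-n/2)\omega}u)$ and observe the cancellation. The only cosmetic difference is that the paper derives the mean curvature law directly from $h_g = \tfrac12(\partial_{\nu_g}g_{ij})g^{ij}$, whereas you propose to get it from the conformal change of the Levi--Civita connection; either way yields the same formula and the rest of the computation is identical.
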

\begin{proof}
By definition, the second fundamental form is given by $II_{ij} = \frac{1}{2}\bd_\nu g_{ij}$. 
Thus, one has the following formula for the mean curvature
\begin{equation}
\label{mean}
h_g =  \frac{1}{2}(\bd_{\nu_g} g_{ij})g^{ij}.
\end{equation}
Note that $\nu_{e^{2\omega}g} = e^{-\omega}\nu_g$. Therefore, formula~(\ref{mean}) implies
\begin{equation*}
\begin{split}
h_{e^{2\omega}g} &= \frac{1}{2}e^{-3\omega}(\bd_{\nu_g}(e^{2\omega}g_{ij}))g^{ij} = 
e^{-\omega}(\bd_{\nu_g}\omega) g^{ij}g_{ij} + \frac{1}{2}e^{-\omega}(\bd_{\nu_g}g_{ij})g^{ij}\\ &=
 e^{-\omega}((n-1)\bd_{\nu_g}\omega + h_g).
\end{split}
\end{equation*}
Applying this transformation law to $B_g$, we obtain
\begin{equation*}
\begin{aligned}
B_{e^{2\omega}g}(e^{(-\frac{n}{2}+1)\omega}u) &= 
e^{-\omega}\bd_{\nu_g}(e^{(-\frac{n}{2}+1)\omega}u) +\\ 
\; &\; e^{-\omega}\frac{n-2}{2(n-1)}((n-1)\bd_{\nu_g}\omega + h_g)e^{(-\frac{n}{2}+1)\omega}u = 
e^{-\frac{n}{2}\omega} B_{g}u.
\end{aligned}
\end{equation*}
\end{proof}

Proposition~\ref{confBVP} allows us to define two types of conformally covariant eigenvalue 
problems on $M$. 

 \vspace{0.5cm}
 
{\em Conformal Robin BVP}:
\begin{equation}\label{ConfN:def}
\left\{
\begin{aligned}
P_{1,g}\phi(x) &=\lambda \phi(x), \quad x\in M\\
B_g\phi(x) &= 0, \quad x\in \bd M.  
\end{aligned}
\right.
\end{equation}
Proposition~\ref{confBVP} implies that solutions of the Conformal Robin BVP~(\ref{ConfN:def}) 
with $\lambda = 0$ transform under the rule \eqref{kernel:change}. In the following we 
denote the operator $P_{1,g}$ with conformal Robin BC as $P_{1,g}^R$.

\vspace{0.5cm}

{\em Conformal Steklov BVP (Conformal Dirichlet-to-Robin map)}
Suppose that $\lambda=0$ is {\em not} an eigenvalue of the Dirichlet BVP \eqref{Dirichlet:def}. Then for any $u\in C^\infty(\bd M)$ there exists a unique solution $\cH_g(u)$ (see e.g. \cite[\S 1.3]{LLS}) 
to 
\begin{equation}\label{ConfDtN:def}
\left\{
\begin{aligned}
P_{1,g}(\cH_g(u))(x) &=0, \quad x\in M\\
\cH_g(u) &= u, \quad x\in \bd M.  
\end{aligned}
\right.
\end{equation}
We define the {\em Conformal Dirichlet-to-Robin map} $\cD_g\colon C^\infty(\bd M)\to 
C^\infty(\bd M)$ as 
\begin{equation}\label{Dirichlet:to:Robin:def}
\cD_g(u) = B_g(\cH_g(u)).
\end{equation}
We have then
\begin{prop}
The operator $\cD_g$ is conformally covariant, i.e.
$$
\cD_{e^{2\omega}g}\left(e^{(-\frac{n}{2}+1)\omega}u\right) = e^{-\frac{n}{2}\omega}\cD_g(u). 
$$
\end{prop}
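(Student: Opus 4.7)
The plan is to reduce the claim to the already-established conformal covariance of $B_g$ (Proposition \ref{confBVP}) by first showing that the harmonic (i.e.\ $P_{1,g}$-extension) operator $\cH_g$ itself transforms under the rule \eqref{kernel:change}. Concretely, I will show that if $\hat g = e^{2\omega}g$ and $\hat u = e^{(-n/2+1)\omega|_{\bd M}}u$, then
$$\cH_{\hat g}(\hat u) = e^{(-n/2+1)\omega}\cH_g(u) \quad \text{on } M.$$
Granting this, the result drops out in one line:
$$\cD_{\hat g}(\hat u) = B_{\hat g}\bigl(\cH_{\hat g}(\hat u)\bigr) = B_{\hat g}\bigl(e^{(-n/2+1)\omega}\cH_g(u)\bigr) = e^{-n\omega/2}\,B_g\bigl(\cH_g(u)\bigr) = e^{-n\omega/2}\cD_g(u),$$
where the third equality is exactly Proposition \ref{confBVP} applied to the function $\cH_g(u)$.

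The first step, transformation of $\cH_g$, is straightforward: set $v = \cH_g(u)$ and $\hat v = e^{(-n/2+1)\omega}v$. The interior conformal covariance
$$P_{1,\hat g} = e^{-(n/2+1)\omega}\,P_{1,g}\,e^{(n/2-1)\omega}$$
immediately gives $P_{1,\hat g}\hat v = e^{-(n/2+1)\omega}P_{1,g}v = 0$, and the boundary trace of $\hat v$ is $\hat u$ by construction. Uniqueness of the solution to \eqref{ConfDtN:def} (with respect to $\hat g$) then forces $\hat v = \cH_{\hat g}(\hat u)$.

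The one subtlety worth flagging is well-posedness of $\cH_{\hat g}$: the definition of $\cD_{\hat g}$ requires that $0$ not be a Dirichlet eigenvalue for $\hat g$ either. This is not an obstacle, though, because Proposition \ref{conformal:BC}(a) shows that the Dirichlet kernels of $P_{1,g}$ and $P_{1,\hat g}$ are in bijection via multiplication by $e^{-(n/2-1)\omega}$; hence the Dirichlet kernel is trivial for $g$ iff it is trivial for $\hat g$, and our standing hypothesis transfers to the conformally related metric. Apart from this small check, the entire proof is a clean composition of known transformation laws, so I expect no real obstacles.
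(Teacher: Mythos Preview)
Your proof is correct and follows essentially the same approach as the paper: first establish the transformation law for $\cH_g$ via conformal covariance of $P_{1,g}$ and uniqueness of the Dirichlet extension, then compose with Proposition~\ref{confBVP}. If anything, your write-up is more careful than the paper's, since you explicitly verify that the hypothesis ``$0$ is not a Dirichlet eigenvalue'' is conformally invariant.
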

\begin{proof}
Conformal covariance of $P_{1,g}$ implies that 
$$
\cH_{e^{2\omega}g}(e^{(-\frac{n}{2}+1)\omega}u) = e^{(-\frac{n}{2}+1)\omega}\cH_g(u).
$$
Proposition~\ref{confBVP} and the definition of $\cD_g$ then imply
$$
\cD_{e^{2\omega}}(e^{(-\frac{n}{2}+1)\omega}u) = 
B_{e^{2\omega}g}\left(e^{(-\frac{n}{2}+1)\omega}\cH_g(u)\right) = 
e^{-\frac{n}{2}\omega}B_g(\cH_g(u)) = e^{-\frac{n}{2}\omega}\cD_g(u).
$$
\end{proof}

\begin{remark}
It is easy to see that $(\ker P_{1,g}^R)|_{\bd M} = \ker\cD_g$. Also, 
the number of negative eigenvalues of $\cD_g$ is equal to the number of negative eigenvalues 
of the Robin boundary value problem \eqref{ConfN:def} minus the number of negative eigenvalues for 
the Dirichlet problem \eqref{Dirichlet:def}. This is proved in 
Theorem \ref{prop:negative:eig:count}. Both of these claims remain true when 0 is a 
Dirichlet eigenvalue, provided $\cD_g$ is replaced by the operator $\widehat\cD_g$ defined below.
\end{remark}

If 0 is an eigenvalue of the Dirichlet problem \eqref{Dirichlet:def}, then
$\ker P_{1,g}^D$ is nontrivial. Define
\[
	S = \left\{ u \in C^\infty(\bd M) : \int_{\bd M} u \bd_\nu w \,d\sigma_g = 0 
	\text{ for all } w \in \ker P_{1,g}^D \right\}.
\]
The boundary value problem \eqref{ConfDtN:def} has a solution 
for any $u \in S$. Moreover, the solution is unique if we require it to be orthogonal to 
$\ker P_{1,g}^D$ in $L^2(M)$. Denoting this unique solution by $\widehat\cH_g(u)$, we 
define the operator $\widehat \cD_g \colon S \to S$ by
\begin{equation}\label{conformal:DtoN}
	\widehat \cD_g u = \Pi_S B_g(\widehat\cH_g(u)),
\end{equation}
where $\Pi_S$ is the $L^2(\bd M)$-orthogonal projection onto $\bar S$. If 
$w \in \ker P_{1,g}^D$, then $B_g w = \bd_\nu w \in S^\perp$, so $\Pi_S(B_g w) = 0$ and we in fact have
\[
	\widehat \cD_g u = \Pi_S B_g (\psi)
\]
where $\psi$ is \emph{any} solution to the boundary value problem \eqref{ConfDtN:def}. 
Alternatively, one can easily show that there exists a unique solution $\widetilde\cH_g(u)$ 
to \eqref{ConfDtN:def} with the property that $B_g(\widetilde\cH_g(u)) \in S$, and so
\[
	\widehat \cD_g u = B_g(\widetilde\cH_g(u)).
\]

\section{Generically there is no zero eigenvalue: proof of Theorem \ref{not0}}\label{sec:no:zero}

Let $g$ be a metric such that $\dim\ker P_{1,g}^R = k>0$. For simplicity we assume $R_g\equiv -1$ and $h_g\equiv 0$.
The goal of this section is the following proposition. 
\begin{proposition}
\label{generic:thm}
There exists $g_t = g + t\tilde g$ such that for all small enough $t\ll 1$ one has $\dim\ker P_{1,g_t}^R <k$. 
\end{proposition}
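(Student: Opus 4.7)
The plan is to apply analytic perturbation theory to the one-parameter family $P_{1,g_t}^R$ and show that a suitable direction $\tilde g$ produces a nontrivial first-order shift of at least one eigenvalue away from $0$.

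For $|t|$ small, $g_t = g + t\tilde g$ is a smooth Riemannian metric, and the associated quadratic form
\[
Q_{g_t}(\phi) = \int_M \Big( |\nabla \phi|_{g_t}^2 + \tfrac{n-2}{4(n-1)} R_{g_t}\, \phi^2 \Big)\, dv_{g_t} + \tfrac{n-2}{2(n-1)} \int_{\bd M} h_{g_t}\, \phi^2 \, d\sigma_{g_t}
\]
on $H^1(M)$ depends analytically on $t$. By the Kato--Rellich theorem, the eigenvalues of $P_{1,g_t}^R$ near $0$ are analytic in $t$, and the total multiplicity of eigenvalues lying in a small neighborhood of $0$ equals $k = \dim \ker P_{1,g}^R$ for all small $t$. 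So it suffices to find $\tilde g$ for which at least one eigenvalue branch through $0$ leaves $0$ at first order.

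Fix an $L^2(M, dv_g)$-orthonormal basis $\{\phi_1, \ldots, \phi_k\}$ of $\ker P_{1,g}^R$. Standard first-order perturbation theory identifies the leading coefficients of these branches with the eigenvalues of the symmetric $k \times k$ matrix
\[
A_{ij}(\tilde g) = \frac{d}{dt}\bigg|_{t=0} Q_{g_t}(\phi_i, \phi_j).
\]
If $A(\tilde g) \neq 0$ for some $\tilde g$, then for $0 < |t| \ll 1$ at least one branch is nonzero, yielding $\dim \ker P_{1,g_t}^R < k$. Using $R_g \equiv -1$ and $h_g \equiv 0$ along with the standard formulas for the variations of $|\nabla \phi|^2_g$, $R_g$, $dv_g$, $h_g$, and $d\sigma_g$, one rewrites
\[
A_{ij}(\tilde g) = \int_M \langle \tilde g, T_{ij} \rangle_g \, dv_g + \int_{\bd M} \langle \tilde g, W_{ij} \rangle_g \, d\sigma_g,
\]
for symmetric $2$-tensors $T_{ij}$ on $M$ and $W_{ij}$ on $\bd M$ built differentially from the pair $(\phi_i, \phi_j)$.

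Suppose, for contradiction, that $A(\tilde g) = 0$ for every symmetric $2$-tensor $\tilde g$. Restricting to $\tilde g$ with compact support in $M \setminus \bd M$ forces $T_{ij} \equiv 0$ on $M$ for all $i, j$, reducing the analysis exactly to the closed-manifold case treated in \cite{GHJL}. The explicit form of $T_{ij}$, combined with the elliptic equations $P_{1,g} \phi_i = 0$, yields via Aronszajn's unique continuation theorem that all $\phi_i$ vanish identically, contradicting $k > 0$. Hence some $\tilde g$ produces $A(\tilde g) \neq 0$, and the proposition follows. The main obstacle is the explicit identification of the tensor $T_{ij}$ and the verification that its vanishing is incompatible with the existence of a nontrivial kernel; the boundary tensor $W_{ij}$ plays no role in this step, since the perturbation is chosen compactly supported in the interior.
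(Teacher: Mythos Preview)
Your approach is essentially the same as the paper's: both reduce to first-order perturbation theory, restrict to $\tilde g$ supported away from $\bd M$ so that only the interior tensor matters, and then invoke the closed-manifold argument of \cite{GHJL}. One correction worth noting: the contradiction in the final step is not obtained via Aronszajn's unique continuation. In \cite{GHJL} (and as reproduced in the paper) one computes, for traceless $\tilde g$, that the diagonal entry pairs $\tilde g$ against the tensor $q = c_n\psi(2\mathrm{Hess}^\circ\psi - \psi\,\mathrm{Ric}^\circ) + (2c_n-1)(d\psi\otimes d\psi)^\circ$; if $q\equiv 0$, then restricting to the nodal set $N(\psi)\cap\operatorname{int}M$ forces $(d\psi\otimes d\psi)^\circ=0$ there, hence $d\psi=0$ on the entire nodal set. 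That is ruled out not by Aronszajn but by the results of \cite{Ha,HHL,HHHN} on the Hausdorff dimension of singular sets of elliptic solutions. With that adjustment, your outline matches the paper's proof.
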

Repeating the deformation inductively one gets that for a generic metric, $0$ is not an eigenvalue of $P_{1,g}^R$. Our proof closely follows~\cite{GHJL}, where the same statement is proved for closed manifolds.

Our main observation is that if $\tilde g\equiv 0$ in a neighbourhood of $\partial M$, then the 
computations of~\cite[\S 4]{GHJL} follow through verbatim. Let $E_0=\ker P_{1,g}^R$ be the 
$0$-eigenspace and let $c_n = \frac{n-2}{4(n-1)}$. Consider the operator $Q_{g,\tilde g} = \Pi_{E_0}\frac{d}{dt}|_{t=0}(\Delta_{g_t} + c_n R_{g_t})\big|_{E_0}$ on $E_0$. Then Proposition~\ref{generic:thm} is equivalent to the statement that 
$Q_{g,\tilde g}\not\equiv 0$.

In~\cite{GHJL} one obtains that if $\mathrm{tr}_g \tilde g=0$, then
$$
(Q_{g,\tilde g}\psi,\psi) = (\tilde g,c_n\psi(2\mathrm{Hess}^{\circ}(\psi) - \psi\mathrm{Ric}^{\circ}) 
+(2c_n-1)(d\psi\otimes d\psi)^\circ),
$$
where $\circ$ denotes the traceless part of a bilinear form.

Assume that $q=c_n\psi(2\mathrm{Hess}^{\circ}(\psi) - \psi\mathrm{Ric}^{\circ}) +(2c_n-1)(d\psi\otimes d\psi)^\circ\not\equiv 0$. Then there exists $p\in\mathrm{int}M$ such that $q(p)\ne 0$. Let 
$p\in U\subset \mathrm{int}M$ be a neighbourhood of $p$ and let $\rho_\varepsilon\in C^\infty_0(U)$ be 
a cut-off function such that $\rho_\varepsilon(p)= 1$. Set $\tilde g = \rho_\varepsilon q$, then  
$(Q_{g,\tilde g}\psi,\psi) > 0$ for $\varepsilon\ll1$, i.e. $Q_{g,\tilde g}\not \equiv 0$.

Assume now that $q\equiv 0$. Then, since $R_g\equiv -1$ and $h_g\equiv 0$, $\psi$ is an eigenfunction 
of $\Delta_g$ with eigenvalue $c_n$, i.e it is orthogonal to constant function. As a result, the nodal set 
$N(\psi)$ intersects $\operatorname{int}M$ nontrivially. Let us restrict $q=0$ to $N(\psi)\cap\operatorname{int}M$. Then, by 
definition of $q$ one obtains $(d\psi\otimes d\psi)^\circ = 0$ there. The latter is equivalent to 
$$
d\psi\otimes d\psi = \frac{1}{n}|\psi|_g^2g,
$$
which implies $d\psi\equiv 0$. This is impossible by, e.g. 
\cite{Ha,HHL,HHHN}, as in \cite{GHJL}.


\section{Number of negative eigenvalues: proof of Theorem \ref{thm:many:negative}}\label{sec:many:negative}

The proof is very similar to the proof of Proposition 3.2 in the paper of 
El Sayed~\cite{Elsayed}. Let $g_0$ be some metric on $M$. The Rayleigh quotient 
of $P_{1,g}^R$ is given by
$$
R_g[u] = \frac{\int_M |\nabla u|^2\,dv_g + \frac{n-2}{4(n-1)}\int_M R_gu^2\,dV_g + 
\frac{n-2}{2(n-1)}\int_{\partial M}h_gu^2\,d\sigma_g}{\int_M u^2\, dV_g}.
$$

The plan is to locally change the metric $g_0$ to a new metric $g_p$ around a point $p$ to 
create a function $u_p$ supported in the neighbourhood of $p$ such that $R_{g_p}[u_p]<0$. 
Doing so at $m$ different points creates an $m$-dimensional negative subspace for the Rayleigh quotient.

Let $p_i$, $i=1,\ldots, m$ be a collection of points in the interior of $M$ and let $p_i\in U_i$ 
be sufficiently small pairwise disjoint geodesic balls lying in the interior of $M$. We identify 
each $U_i$ with a copy of $\mathbb{S}^n\backslash\{p\}$ for some fixed point $p\in\mathbb{S}^n$. 
By a result of Aubin~\cite{Au} there exists metric $g$ on $\mathbb{S}^n$ such that the 
Yamabe invariant $Y_g(\mathbb{S}^n)<0$. Therefore there exists a function 
$u\in C^\infty(\mathbb{S}^n)$ such that $R_g[u]<-2\varepsilon$ for some $\varepsilon>0$. 
Let $V_\delta\subset V_{2\delta}\subset \mathbb{S}^n$ be two small balls around $p$ small 
enough so that there exists a cut-off function $\varphi_\delta$ with $\varphi_\delta\equiv 1$ on 
$\mathbb{S}^n\backslash V_{2\delta}$, $\varphi_\delta = 0$ on $V_\delta$, 
$0\leqslant\varphi_\delta\leqslant 1$ and $||\nabla\varphi_\delta||^2_{L_g^2(\mathbb{S}^n)}\leqslant \delta$. 
As $\dim M\geqslant 3$, such balls exist for all $\delta>0$. Then one has 
$R_g[u\varphi_\delta]\to R_g[u]$ as $\delta\to 0$.

Thus, for small enough $\delta>0$ one has $R_g[u\varphi_\delta]<-\varepsilon$. We let 
$u_i$ be the functions defined as $u\varphi_\delta$ in the neighbourhood $U_i$ of $p_i$. 
We define the metric $g_k$ on $M$ to be any metric equal to $g$ on $U_i\backslash V_{\delta}$, 
where $V_{\delta}$ lies in the corresponding copy of $\mathbb{S}^n$. Then there exist pairwise 
disjoint functions $u_1,\ldots, u_m$ such that $R_{g_n}[u_i]<0$ for all $i$. Thus, the same is true 
for any linear combination of these functions, i.e. $\lambda_m(P_{1,g_n}^R)\leqslant 0$.


\section{Curvature prescription}\label{sec:curvature:prescription}

\subsection{Background}  A natural extension of the Yamabe problem on a manifold $M$ with boundary 
$\bd M$ is to
prescribe the scalar curvature $R_g$ in the interior of $M$ and the mean curvature $h_g$ on $\bd M$.   
Existence and uniqueness for this problem was considered e.g. in the papers \cite{Escobar:1996:CV,Escobar:2003:JFA}.  In the paper \cite[\S 5]{CGJP2}, the authors obtained several results that give 
obstructions 
to prescribing curvature in terms of eigenfunctions lying in the kernel of the conformal Laplacian.   
In this section, we generalize those results to manifolds with boundary.

We start with the following result: 
\begin{theorem}
\label{prescription1}
For any $u\in\ker P_{1,g}^R$ one has the following identity
$$
\frac{n-2}{4(n-1)}\int\limits_M R_g u \,dv_g + \frac{n-2}{2(n-1)}\int\limits_{\bd M} h_g u \,d\sigma_g = 0.
$$
\end{theorem}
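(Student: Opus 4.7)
The identity is a direct consequence of pairing the equation $P_{1,g}u = 0$ against the constant test function $1$ and applying the divergence theorem, with the Robin boundary condition supplying the mean curvature term. Concretely, my plan is as follows.

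First, I would integrate the equation $P_{1,g}u = \Delta_g u + \frac{n-2}{4(n-1)} R_g u = 0$ over $M$. This immediately yields
\begin{equation*}
\int_M \Delta_g u \, dv_g + \frac{n-2}{4(n-1)} \int_M R_g u \, dv_g = 0.
\end{equation*}
Since the paper declares $\Delta_g$ to be the positive-definite Laplacian (so $\Delta_g = -\operatorname{div}_g \nabla$), Green's formula gives $\int_M \Delta_g u \, dv_g = -\int_{\bd M} \bd_{\nu_g} u \, d\sigma_g$.

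Next, I would invoke the conformal Robin boundary condition $B_g u = 0$, which reads $\bd_{\nu_g} u = -\frac{n-2}{2(n-1)} h_g u$ on $\bd M$. Substituting this into the boundary term produced above converts it precisely into $\frac{n-2}{2(n-1)} \int_{\bd M} h_g u \, d\sigma_g$, yielding the claimed identity.

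There is no real obstacle here; the proof is a one-line integration by parts combined with the boundary condition. The only minor thing to be careful about is the sign convention for $\Delta_g$ (positive Laplacian versus the geometer's convention), which must be tracked consistently with the definition \eqref{def:conflap} of $P_{1,g}$ and with the outward direction of $\nu_g$. One could also note in passing that this identity can be read as the statement that the constant function $1$ lies in the range orthogonal complement of $\ker P_{1,g}^R$ in the appropriate pairing, i.e.\ a necessary compatibility condition whenever $\ker P_{1,g}^R$ is nontrivial, which foreshadows the obstruction-to-prescription results to follow.
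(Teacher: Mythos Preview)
Your proof is correct and is essentially the same as the paper's: both amount to pairing $P_{1,g}u=0$ against the constant function $1$ via Green's formula and then invoking the Robin condition $B_g u=0$ to convert the boundary normal derivative into the mean curvature term. The paper phrases it dually, starting from $P_{1,g}(1)=\frac{n-2}{4(n-1)}R_g$ and moving the operator onto $u$, but the computation and ingredients are identical.
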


\begin{proof}
By definition one has $\frac{n-2}{4(n-1)}R_g = P_{1,g}(1)$. Integrating by parts we obtain
$$
\int\limits_M P_{1,g}(1)u\,dv_g = \int\limits_M P_{1,g}(u)\,dv_g + 
\int\limits_{\bd M}\bd_n u\,d\sigma_g = -\frac{n-2}{2(n-1)}\int_{\bd M} h_g u\,d\sigma_g.
$$
\end{proof}

This theorem has consequences for the curvature prescription problem which we describe below. 
For a fixed conformal class of metrics $\mC = [g]$ one defines the following curvature map
$$
\mR\colon \mC\to C^\infty(M)\times C^\infty(\partial M),\qquad \mR(g) = (R_g,h_g).
$$
Our goal is to determine the image $\mR(\mC)$. It turns out that non-trivial elements $u\in 
\ker(P_{1,g}^R)$ constitute obstructions to this curvature prescription problem. 
An easy example is the following corollary.

\begin{corollary}
For any $u\in\ker P_{1,g}^R \setminus\{0\}$, the pair $(u,u|_{\bd M})$ does not belong to $\mR(\mC)$.
\end{corollary}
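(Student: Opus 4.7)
The plan is to argue by contradiction using Theorem~\ref{prescription1} applied to the (hypothetical) metric achieving the prescribed curvature data, combined with the conformal transformation rule~\eqref{kernel:change} for the kernel.

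First, suppose for contradiction that $(u,u|_{\bd M})\in\mR(\mC)$, so that there exists $\hat g = e^{2\omega}g\in[g]$ with $R_{\hat g} = u$ on $M$ and $h_{\hat g} = u|_{\bd M}$ on $\bd M$. By the conformal covariance of $P_{1,g}^R$ discussed in Proposition~\ref{confBVP} and the surrounding remarks, the function
\[
	\tilde u := e^{-(n/2-1)\omega}u
\]
lies in $\ker P_{1,\hat g}^R$ and is non-zero wherever $u$ is.

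Next, I would apply Theorem~\ref{prescription1} with respect to the metric $\hat g$ and the element $\tilde u$, which yields
\[
	\frac{n-2}{4(n-1)}\int_M R_{\hat g}\,\tilde u\,dv_{\hat g}
	+ \frac{n-2}{2(n-1)}\int_{\bd M} h_{\hat g}\,\tilde u\,d\sigma_{\hat g} = 0.
\]
Substituting $R_{\hat g}=u$, $h_{\hat g}=u|_{\bd M}$, $\tilde u = e^{-(n/2-1)\omega}u$, together with the volume and area transformations $dv_{\hat g} = e^{n\omega}dv_g$ and $d\sigma_{\hat g} = e^{(n-1)\omega}d\sigma_g$, converts this identity to
\[
	\frac{n-2}{4(n-1)}\int_M u^2 e^{(n/2+1)\omega}\,dv_g
	+ \frac{n-2}{2(n-1)}\int_{\bd M} u^2 e^{(n/2)\omega}\,d\sigma_g = 0.
\]

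Finally, since $n\geq 3$ the prefactors are strictly positive and both integrands are pointwise non-negative, so both integrals must vanish individually. The vanishing of the interior integral, in view of $e^{(n/2+1)\omega}>0$, forces $u\equiv 0$ on $M$, contradicting $u\in\ker P_{1,g}^R\setminus\{0\}$. I do not see any genuine obstacle in this argument; the only point requiring minor care is keeping track of the conformal weights when changing from $dv_{\hat g},d\sigma_{\hat g}$ back to $dv_g,d\sigma_g$, so that the exponents of $e^{\omega}$ combine correctly and yield a strictly positive weight.
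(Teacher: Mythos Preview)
Your proof is correct and follows essentially the same approach as the paper: both argue by contradiction, apply Theorem~\ref{prescription1} to the element $e^{-(n/2-1)\omega}u\in\ker P_{1,\hat g}^R$ for the metric $\hat g$ realizing the curvature data, and observe that the resulting identity forces a sum of non-negative terms with strictly positive weights to vanish, hence $u\equiv 0$. The only cosmetic difference is that you rewrite the integrals back in terms of $dv_g$ and $d\sigma_g$, whereas the paper leaves them in terms of $dv_{\hat g}$ and $d\sigma_{\hat g}$.
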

\begin{proof}
By Theorem~\ref{prescription1} and formula~\eqref{kernel:change}, for any metric 
$\hat g = e^{2\omega}g$ one has
$$
\frac{n-2}{4(n-1)}\int\limits_M R_{\hat g} e^{-(n/2-1)\omega} u \,dv_{\hat g} + 
\frac{n-2}{2(n-1)}\int\limits_{\bd M} h_{\hat g} e^{-(n/2-1)\omega} u \,d\sigma_{\hat g}=0.
$$
Assume the contrary, i.e. $\mR(\hat g) = (u,u|_{\bd M})$. Then 
the previous formula implies
$$
\frac{n-2}{4(n-1)}\int\limits_M e^{-(n/2-1)\omega} u^2 \,dv_{\hat g} + 
\frac{n-2}{2(n-1)}\int\limits_{\bd M} e^{-(n/2-1)\omega} u^2 \,d\sigma_{\hat g}=0,
$$
which is impossible.
\end{proof}

Similarly, any pair that has the same or the opposite strict signs as $(u,u|_{\bd M})$ can not belong to $\mR(\mC)$.

\begin{corollary}
Let $M$ be a manifold with boundary equipped with a conformal class $\mC$, let $g\in \mC$ 
and $u\in \ker P_{1,g}^R\backslash \{0\}$. Then any pair $(Q,f)$ such that $(Q,f)$ 
has the same or the opposite strict sign as $(u,u|_{\bd M})$ can not belong to $\mR(\mC)$.
\end{corollary}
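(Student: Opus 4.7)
The plan is to argue by contradiction, generalizing the proof of the preceding corollary via Theorem~\ref{prescription1}. Suppose $(Q,f) \in \mR(\mC)$, so there exists $\omega \in C^\infty(M)$ with $\hat g = e^{2\omega}g$, $R_{\hat g}=Q$, and $h_{\hat g}=f$. By the transformation law~\eqref{kernel:change}, the function $\hat u := e^{-(n/2-1)\omega}u$ lies in $\ker P_{1,\hat g}^R$. Applying Theorem~\ref{prescription1} to $\hat g$ and $\hat u$ and writing out $\hat u$ yields
$$
\frac{n-2}{4(n-1)}\int_M Q\, u\, e^{-(n/2-1)\omega}\,dv_{\hat g} + \frac{n-2}{2(n-1)}\int_{\bd M} f\, u\, e^{-(n/2-1)\omega}\,d\sigma_{\hat g}=0.
$$

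The coefficients are positive for $n\geq 3$, the factor $e^{-(n/2-1)\omega}$ is strictly positive, and the measures $dv_{\hat g}, d\sigma_{\hat g}$ are positive. Consequently, up to multiplication by a positive density, the first integrand has the pointwise sign of $Qu$ and the second the pointwise sign of $f\,u|_{\bd M}$. Under the hypothesis that $(Q,f)$ has the same strict sign as $(u, u|_{\bd M})$, both products are pointwise non-negative with at least one of them strictly positive on a set of positive measure, which forces the left-hand side to be strictly positive --- a contradiction. The opposite-strict-sign case is handled identically after a global sign flip.

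The previous corollary is the special case $(Q,f) = (u, u|_{\bd M})$, for which $Qu = u^2$ and $f\,u|_{\bd M} = u^2|_{\bd M}$; the present statement is a direct extension and I do not anticipate any real obstacle. The only delicate point is parsing ``strict sign'' so as to guarantee that the resulting integrand does not vanish identically. For this one uses that a nontrivial $u \in \ker P_{1,g}^R$ cannot vanish identically on $\bd M$: if it did, the Robin boundary condition would also force $\bd_\nu u \equiv 0$ on $\bd M$, and Aronszajn's unique continuation theorem would then give $u \equiv 0$, contradicting $u \neq 0$.
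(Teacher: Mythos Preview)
Your proof is correct and is precisely the argument the paper intends: the paper does not write out a separate proof for this corollary but simply precedes it with the sentence ``Similarly, any pair that has the same or the opposite strict signs as $(u,u|_{\bd M})$ can not belong to $\mR(\mC)$,'' referring back to the proof of the previous corollary via Theorem~\ref{prescription1}. You have faithfully unpacked that ``Similarly.''

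One minor remark: your closing paragraph about unique continuation on the boundary is more caution than the situation requires. Under any reasonable reading of ``strict same sign'' (and in particular for all the examples the paper gives, namely $(e^{\omega_1}u^{p_1}, e^{\omega_2}u^{p_2}|_{\bd M})$), the interior product $Qu$ is already nonnegative and strictly positive on the open set where $u\neq 0$, so the first integral alone is positive and no appeal to the boundary is needed. Your boundary argument would only be relevant if one allowed $Q\equiv 0$ while $f$ alone carries the strict sign condition, which is not the natural interpretation of the hypothesis.
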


Examples of such pairs include $(e^{\omega_1}u^{p_1},  e^{\omega_2}u^{p_2}|_{\bd M})$ for 
any pair of odd natural numbers $(p_1,p_2)$ and $(\omega_1,\omega_2)\in C^\infty(M)\times C^\infty(\bd M)$.

\subsection{Obstructions from nodal domains}
Next, we would like to generalize \cite[Thm. 5.5, Cor. 5.6]{CGJP2} to manifolds with boundary.  
There is 
an extensive literature studying the relationship between the spectral theory 
of conformally covariant operators on a manifold $M$, and the behaviour of 
a related conformally covariant operators on $\bd M$, see e.g. \cite{GZ,Case,Chang:Gonzalez,GP-18}.  The following discussion deals with the relationship between nodal sets of 
eigenfunctions 
of a conformally covariant operator $P$ on $M$, and the nodal sets of 
eigenfunctions of a related operator $Q$ on $\bd M$.  Clearly, if the nodal set $\cN(u)$ 
of an eigenfunction $u$ of $P$ intersects the boundary, the limiting value of $u$ on $\bd M$ 
is equal to $0$. Conversely, let us suppose that $u(x)$ is an eigenfunction of a conformally 
covariant BVP, and that $u(y)=0$ for some $y\in\bd M$.  A natural question is whether 
there exists a sequence $x_i\in \operatorname{int}M$ such that $u(x_i)=0$ and $x_i \to y$. In other words:  
is a nodal set on the boundary always a limit of an interior nodal set? This is the case by a 
simple application of the strong maximum principle. Indeed, assume the contrary, i.e. that $u(y) = 0$ 
but $u(x)>0$ for $x$ in the pointed neighbourhood of $y$. Then by Hopf's lemma (see, 
e.g.~\cite[Theorem 2.8(b)]{K}) one has $\bd_\nu u(y)<0$. At the same time, the boundary condition 
$B_g(u)(y) = \bd_\nu u(y) + \frac{n-2}{2(n-1)}h_gu(y) = 0$ yields $\bd_\nu u(y) = 0$, a contradiction.

\vspace{0.3cm}
We now describe the obstruction in question. We prove

\begin{theorem}
\label{nodal1}
Let $u\in \ker(P_{1,g}^R)$ and let $\Omega$ be a nodal domain of $u$. Then, for all $v\in C^\infty(M)$
\begin{equation}
\int\limits_\Omega |u|P_{1,g}(v) \,dv_g = - \int\limits_{\bd\Omega\backslash \bd M} v 
|\nabla u|_g\, d\sigma_g - \int\limits_{\bd\Omega\cap\bd M} |u|B_g(v)\,d\sigma_g.
\end{equation}
\end{theorem}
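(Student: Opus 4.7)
The natural approach is Green's identity applied to the pair $(u,v)$ on the nodal domain $\Omega$, using the eigenvalue equation $P_{1,g}u = 0$ in the interior and the conformal Robin condition $B_g u = 0$ on the boundary portion. I would proceed as follows.

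First, without loss of generality I assume $u > 0$ on $\Omega$, so that $|u| = u$ there (the case $u < 0$ is identical after a sign change). Writing $\bd\Omega$ as the disjoint union of $\bd\Omega \setminus \bd M$ (the interior nodal hypersurface, where $u = 0$) and $\bd\Omega \cap \bd M$ (the part of the original boundary bounding $\Omega$), I apply Green's formula:
\begin{equation*}
\int_\Omega \bigl(u\,\Delta_g v - v\,\Delta_g u\bigr)\,dv_g
= \int_{\bd\Omega} \bigl(v\,\bd_{\hat\nu} u - u\,\bd_{\hat\nu}v\bigr)\,d\sigma_g,
\end{equation*}
where $\hat\nu$ denotes the outward unit normal to $\bd\Omega$. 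Since $P_{1,g}$ differs from $\Delta_g$ by multiplication by the scalar $\frac{n-2}{4(n-1)}R_g$, the two symmetric terms $\frac{n-2}{4(n-1)}R_g uv$ cancel inside the integral, so the left-hand side equals $\int_\Omega (u\,P_{1,g}v - v\,P_{1,g}u)\,dv_g$. Using $P_{1,g}u \equiv 0$, this reduces to $\int_\Omega u\,P_{1,g}v\,dv_g$.

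Next I analyze the two boundary pieces. On $\bd\Omega \setminus \bd M$ we have $u = 0$, so the term $u\,\bd_{\hat\nu}v$ vanishes; moreover since $u > 0$ on $\Omega$ and vanishes on this piece, the outward normal points in the direction of decreasing $u$, giving $\bd_{\hat\nu}u = -|\nabla u|_g$. This contributes precisely $-\int_{\bd\Omega\setminus\bd M} v\,|\nabla u|_g\,d\sigma_g$. On $\bd\Omega \cap \bd M$ the outward normal coincides with $\nu_g$, and the Robin condition $B_g u = \bd_{\nu_g}u + \frac{n-2}{2(n-1)}h_g u = 0$ lets me substitute $\bd_{\nu_g}u = -\frac{n-2}{2(n-1)}h_g u$, yielding
\begin{equation*}
v\,\bd_{\nu_g}u - u\,\bd_{\nu_g}v
= -u\Bigl(\bd_{\nu_g}v + \tfrac{n-2}{2(n-1)}h_g v\Bigr)
= -u\,B_g v = -|u|\,B_g v.
\end{equation*}
Summing the two boundary contributions produces the claimed identity.

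The main technical obstacle is that $\bd\Omega$ need not be smooth: interior critical zeros of $u$ form a set of codimension at least two (by the standard unique continuation/Hartman--Wintner results cited in the previous section), and at the corner $\overline{\bd\Omega\setminus\bd M}\cap\bd M$ the two boundary pieces meet in an $(n-2)$-dimensional stratum. Both singular sets have $(n-1)$-dimensional Hausdorff measure zero, so the surface integrals are unambiguously defined, and Green's formula can be justified by exhausting $\Omega$ with the smooth sublevel sets $\{x\in\Omega : u(x)>\eps\}$ and letting $\eps \to 0^+$, using that $v\in C^\infty(M)$ and the uniform control on $|\nabla u|$ near regular points of the nodal set.
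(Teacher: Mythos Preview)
Your proof is correct and follows essentially the same route as the paper: assume $u>0$ on $\Omega$, apply Green's identity for $P_{1,g}$ on $\Omega$, and split the boundary into the interior nodal piece (where $u=0$ and $\bd_{\hat\nu}u=-|\nabla u|_g$) and the piece on $\bd M$ (where the Robin condition converts $v\,\bd_\nu u - u\,\bd_\nu v$ into $-u\,B_g v$). The only addition in your write-up is the final paragraph justifying the use of Green's formula despite possible singularities of $\bd\Omega$; the paper's proof omits this point entirely.
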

\begin{proof}
Up to a change of sign, we can assume that $u>0$ on $\Omega$. Integrating by parts we obtain
$$
\int\limits_\Omega uP_{1,g}(v) \,dv_g = \int\limits_\Omega vP_{1,g}(u) \,dv_g - 
\int\limits_{\bd\Omega} (u\bd_nv - v\bd_nu)\,d\sigma_g.
$$

Decomposing the second summand on the right hand side into the sum of 
integrals over $\bd\Omega\cap\bd M$ and $\bd\Omega\backslash\bd M$ we obtain
$$
\int\limits_{\bd\Omega\cap\bd M} (u\bd_nv - v\bd_nu)\,d\sigma_g = 
\int\limits_{\bd\Omega\cap\bd M}u\left (\bd_n v + \frac{n-2}{2(n-1)}h_gv\right)\,d\sigma_g,
$$ 
$$
\int\limits_{\bd\Omega\backslash\bd M} (u\bd_nv - v\bd_nu)\,d\sigma_g = 
-\int\limits_{\bd\Omega\backslash\bd M} v\partial_nu\,d\sigma_g.
$$
Since on $\bd\Omega\backslash\bd M$ one has $\bd_n u = -|\nabla u|_g$ the proof is complete.
\end{proof}

\begin{theorem}
Let $(Q,f)\in \mR([g])$ be the scalar curvature and mean curvature of some metric in 
the conformal class $[g]$. Then, there is a pair of positive functions $\omega_i\in 
C^\infty(M)$, $\omega_b\in C^\infty(\bd M)$ such that for any $u\in\ker(P_{1,g}^R)$ 
and any nodal domain $\Omega$ of $u$,
$$
\int\limits_\Omega Q|u|\omega_i \,dv_g + \int\limits_{\bd\Omega\cap\bd M}f|u|\omega_b\,d\sigma_g<0.
$$
\end{theorem}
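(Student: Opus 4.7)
The strategy exploits the freedom in the conformal class: since $(Q,f)\in\mR([g])$, there exists $\omega\in C^\infty(M)$ such that the metric $\hat g:=e^{2\omega}g$ has the prescribed curvatures, $R_{\hat g}=Q$ and $h_{\hat g}=f$. By Proposition~\ref{confBVP} and formula~\eqref{kernel:change}, the function $\hat u:=e^{-(n/2-1)\omega}u$ lies in $\ker P_{1,\hat g}^R$, and since the conformal factor is everywhere positive the nodal domains of $\hat u$ and $u$ coincide set-theoretically; in particular $\Omega$ is a nodal domain of $\hat u$.

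With this setup in place, I would apply Theorem~\ref{nodal1} in the metric $\hat g$ with the test function $v\equiv 1$. Writing $c_n=\frac{n-2}{4(n-1)}$, one has $P_{1,\hat g}(1)=c_n Q$ and $B_{\hat g}(1)=2c_n f$, so Theorem~\ref{nodal1} produces
$$
c_n\int_\Omega|\hat u|\,Q\,dv_{\hat g}+2c_n\int_{\bd\Omega\cap\bd M}|\hat u|\,f\,d\sigma_{\hat g}=-\int_{\bd\Omega\setminus\bd M}|\nabla\hat u|_{\hat g}\,d\sigma_{\hat g}.
$$
Translating back to the metric $g$ via $|\hat u|\,dv_{\hat g}=e^{(n/2+1)\omega}|u|\,dv_g$ and $|\hat u|\,d\sigma_{\hat g}=e^{(n/2)\omega}|u|\,d\sigma_g$, and then dividing through by $c_n>0$, delivers the desired integral expression with the explicit positive weights $\omega_i:=e^{(n/2+1)\omega}\in C^\infty(M)$ and $\omega_b:=2e^{(n/2)\omega}\in C^\infty(\bd M)$.

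The remaining step, which I expect to be the main obstacle, is upgrading this non-positive identity to strict negativity of the right-hand side $-\int_{\bd\Omega\setminus\bd M}|\nabla\hat u|_{\hat g}\,d\sigma_{\hat g}$. Since $\bd\Omega\setminus\bd M\subset\cN(\hat u)\cap\operatorname{int}M$ and $\hat u$ is a nontrivial solution of the elliptic equation $P_{1,\hat g}\hat u=0$ of definite sign on $\Omega$, the classical unique continuation and nodal-regularity theorems (e.g.~\cite{Ha,HHL,HHHN}, already invoked in Section~\ref{sec:no:zero}) ensure that the regular part of the interior nodal set, where $|\nabla\hat u|_{\hat g}>0$, has positive $(n-1)$-dimensional Hausdorff measure whenever $\bd\Omega\cap\operatorname{int}M\neq\emptyset$; alternatively, Hopf's boundary point lemma gives the same conclusion pointwise on the smooth part of the interior nodal boundary. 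The delicate point is therefore excluding the degenerate case $\bd\Omega\setminus\bd M=\emptyset$, in which $\Omega$ coincides with $\operatorname{int}M$ and the identity collapses to the equality of Theorem~\ref{prescription1}; for a genuine nodal domain of a sign-changing eigenfunction (the situation implicit in Theorem~\ref{obstructionNodal}, where $u\neq 0$ is excluded), at least one nodal hypersurface must enter $\operatorname{int}M$ and bound $\Omega$, and the strict inequality follows.
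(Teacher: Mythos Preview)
Your argument is essentially the same as the paper's: pass to the conformal representative $\hat g=e^{2\omega}g$ realizing $(Q,f)$, apply Theorem~\ref{nodal1} with $v\equiv 1$, and convert the volume and area forms back to $g$. The only cosmetic differences are that you divide through by $c_n$ (so your weights differ from the paper's by harmless positive constants) and that your boundary exponent $e^{(n/2)\omega}$ is in fact the correct one; the paper's $e^{(n+1)\omega/2}$ is a slip in the arithmetic $-(n/2-1)+(n-1)=n/2$.

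Where you go beyond the paper is in the discussion of strictness. The paper simply asserts that the identity of Theorem~\ref{nodal1} yields a strict inequality for positive $v$, without comment; you correctly isolate what is needed, namely that $\int_{\bd\Omega\setminus\bd M}|\nabla\hat u|_{\hat g}\,d\sigma_{\hat g}>0$, and you supply the two ingredients (nontriviality of $\bd\Omega\cap\operatorname{int}M$ and positivity of $|\nabla\hat u|$ on a set of full measure there via Hopf/unique continuation). Your caveat about the degenerate case $\bd\Omega\setminus\bd M=\emptyset$ is also well taken: if $\ker P_{1,g}^R$ happens to contain a function of one sign (equivalently, the conformal class contains a scalar-flat metric with minimal boundary), then for that $u$ the inequality collapses to the equality of Theorem~\ref{prescription1} and is not strict. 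The paper's statement and proof tacitly exclude this situation; your explicit acknowledgement of it is an improvement, not a defect.
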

\begin{proof}
By Theorem~\ref{nodal1} one has
$$
\int\limits_\Omega |u|P_{1,g}(v) \,dv_g + \int\limits_{\bd\Omega\cap\bd M} |u|B_g(v)\,d\sigma_g <0
$$
for any positive function $v \in C^\infty(M)$.
Let $\hat g = e^{2\omega}g$ be the metric such that $\mR(\hat g) = (Q,f)$. Let us apply 
the previous inequality for $v =1$ and the metric $\hat g$, taking into account the transformation 
law~\eqref{kernel:change}
\begin{equation*}
\frac{n-2}{4(n-1)}\int\limits_\Omega e^{-(n/2-1)\omega}|u|Q \,dv_{\hat g} +
\frac{n-2}{2(n-1)}\int\limits_{\bd\Omega\cap\bd M} e^{-(n/2-1)\omega}|u|f\,d\sigma_{\hat g}<0.
\end{equation*}
Equivalently,
$$
\frac{n-2}{4(n-1)}\int\limits_\Omega 
e^{(n/2+1)\omega}|u|Q \,dv_g +
\frac{n-2}{2(n-1)}\int\limits_{\bd\Omega\cap\bd M} e^{\frac{n+1}{2}\omega}|u|f\,d\sigma_g<0.
$$
Setting $\omega_i = \frac{n-2}{4(n-1)}e^{(n/2+1)\omega}$ and 
$\omega_b = \frac{n-2}{2(n-1)}e^{\frac{n+1}{2}\omega}$ completes the proof.
\end{proof}

\begin{corollary}
For any $(Q,f)\in\mR([g])$ and any nodal domain $\Omega$ of 
$u\in\ker(P_{1,g}^R)$, either $Q$ or $f$ must be negative for at least one point of $\bar\Omega$. 
\end{corollary}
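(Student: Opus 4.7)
The plan is a one-line contradiction from the preceding theorem. That theorem furnishes, for each $(Q,f)\in\mR([g])$, a pair of strictly positive weight functions $\omega_i\in C^\infty(M)$ and $\omega_b\in C^\infty(\bd M)$ (depending only on the conformal factor relating $g$ to the representative realizing $(Q,f)$, not on $u$ or $\Omega$) such that
\begin{equation*}
\int_\Omega Q|u|\omega_i\,dv_g+\int_{\bd\Omega\cap\bd M}f|u|\omega_b\,d\sigma_g<0
\end{equation*}
holds for every $u\in\ker(P_{1,g}^R)$ and every nodal domain $\Omega$ of $u$.

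Now suppose, toward a contradiction, that $Q\geq 0$ at every point of $\bar\Omega$ and $f\geq 0$ at every point of $\bar\Omega\cap\bd M$. Since $|u|\geq 0$ everywhere and $\omega_i,\omega_b>0$, both integrands are pointwise nonnegative, and the sum of the two integrals is therefore $\geq 0$, contradicting the strict inequality above. Hence either $Q$ or $f$ must attain a negative value at some point of $\bar\Omega$ (with $f$'s domain understood as $\bar\Omega\cap\bd M$).

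There is essentially no obstacle here; the corollary is a direct sign-reading of the preceding theorem. The only conceptual point worth flagging is that the interior portion $\bd\Omega\setminus\bd M$ of the nodal boundary does not appear in the inequality at all, so the sign obstruction is forced to live either in the bulk values of $Q$ on $\bar\Omega$ or in the boundary values of $f$ on $\bar\Omega\cap\bd M$ — precisely the alternative asserted by the corollary.
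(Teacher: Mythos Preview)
Your proof is correct and is precisely the immediate sign-reading the paper intends; the corollary is stated without proof in the paper, as it follows directly from the preceding theorem exactly as you argue. There is nothing to add.
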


\begin{theorem}
Let $u\in\ker P^R_{1,g}$ and let $\Omega$ be a nodal domain of $u$. Then for any $
\hat g = e^{2\omega}g \in[g]$ one has
$$
-\int\limits_{\bd\Omega\backslash\bd M} 
e^{\left(1 - \frac{n}{2}\right)\omega}|\nabla\hat u|_{\hat g}\,d\sigma_{\hat g} = 
\frac{n-2}{4(n-1)}\int\limits_{\Omega} |u|R_g\,dv_g + \frac{n-2}{2(n-1)}
\int\limits_{\bd\Omega\cap\bd M} h_g |u|\,d\sigma_g,
$$
i.e. the expression on the left hand side is a conformal invariant.
\end{theorem}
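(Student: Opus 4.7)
The strategy is a two-line application of Theorem \ref{nodal1} followed by a direct weight-counting calculation that verifies conformal invariance of the boundary integral.

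First I would apply Theorem \ref{nodal1} with the test function $v \equiv 1$. Since $P_{1,g}(1) = \frac{n-2}{4(n-1)} R_g$ by definition of the conformal Laplacian, and $B_g(1) = \partial_\nu 1 + \frac{n-2}{2(n-1)} h_g = \frac{n-2}{2(n-1)} h_g$, substitution gives immediately
\[
-\int\limits_{\partial\Omega\setminus\partial M} |\nabla u|_g\, d\sigma_g
= \frac{n-2}{4(n-1)}\int\limits_\Omega |u| R_g\, dv_g
+ \frac{n-2}{2(n-1)}\int\limits_{\partial\Omega\cap\partial M} h_g |u|\, d\sigma_g.
\]
This already matches the claimed right-hand side of the theorem for the original metric $g$.

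Second, I would verify that the left-hand side of the statement is in fact equal to $-\int_{\partial\Omega\setminus\partial M} |\nabla u|_g\, d\sigma_g$, so that the identity above is really a statement about a single conformally invariant quantity. Write $\hat g = e^{2\omega}g$ and $\hat u = e^{(1-n/2)\omega} u$, which is the representative in $\ker P_{1,\hat g}^R$ coming from \eqref{kernel:change}. At a point $x \in \partial\Omega\setminus\partial M$ we have $u(x) = 0$, so the product rule yields $d\hat u|_x = e^{(1-n/2)\omega(x)} du|_x$. Combining $|\alpha|_{\hat g} = e^{-\omega}|\alpha|_g$ for a one-form $\alpha$ with this, I get $|\nabla \hat u|_{\hat g} = e^{-n\omega/2}|\nabla u|_g$ on the interior part of the nodal set. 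Together with $d\sigma_{\hat g} = e^{(n-1)\omega} d\sigma_g$ (an $(n-1)$-dimensional hypersurface measure), the total weight becomes
\[
e^{(1-n/2)\omega}\,|\nabla \hat u|_{\hat g}\, d\sigma_{\hat g}
= e^{(1-n/2)\omega}\cdot e^{-n\omega/2}\cdot e^{(n-1)\omega}\,|\nabla u|_g\, d\sigma_g
= |\nabla u|_g\, d\sigma_g,
\]
since the exponents sum to $(1-n/2) - n/2 + (n-1) = 0$. Hence the left-hand side of the theorem is independent of $\omega$ and equals $-\int_{\partial\Omega\setminus\partial M} |\nabla u|_g\, d\sigma_g$, matching the expression computed from Theorem \ref{nodal1} with $v=1$.

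The only mild subtlety — and what I would treat as the main obstacle — is ensuring that the formula $d\hat u|_x = e^{(1-n/2)\omega(x)} du|_x$ holds \emph{pointwise} on the interior part of $\partial\Omega$, which requires $u(x)=0$ there; this is guaranteed because on $\partial\Omega\setminus\partial M$ the function $u$ vanishes by definition of a nodal domain, so the term involving $d\omega$ in the product rule drops out. Everywhere along $\partial\Omega\cap\partial M$ the boundary contribution is absent from the expression we are analyzing, so no additional care is needed there. After these checks, the two displays combine to give the stated identity and conformal invariance.
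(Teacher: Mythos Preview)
Your proof is correct, but it takes a slightly different route from the paper's. The paper applies Theorem~\ref{nodal1} for the \emph{conformal} metric $\hat g$ with test function $v = e^{(1-n/2)\omega}$, obtaining the left-hand side directly, and then uses the conformal covariance laws $P_{1,\hat g}(v) = e^{-(n/2+1)\omega}P_{1,g}(1)$ and $B_{\hat g}(v) = e^{-n\omega/2}B_g(1)$ together with $dv_{\hat g} = e^{n\omega}dv_g$, $d\sigma_{\hat g} = e^{(n-1)\omega}d\sigma_g$, and $|\hat u| = e^{(1-n/2)\omega}|u|$ to reduce the right-hand side to $g$-quantities. You instead apply Theorem~\ref{nodal1} for the \emph{original} metric $g$ with $v=1$, which produces the right-hand side immediately, and then check conformal invariance of the left-hand side by a pointwise weight count on $\partial\Omega\setminus\partial M$, exploiting that $u=0$ there so the $d\omega$ term in $d\hat u$ drops out. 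Both arguments are short and valid; the paper's version leans on the operator covariance and would adapt more readily to other test functions, while yours is more hands-on and isolates exactly why the invariance holds (the vanishing of $u$ on the interior nodal boundary).
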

\begin{proof}
Applying Theorem~\ref{nodal1} for the metric $\hat g$ and $v = e^{\left(1 - \frac{n}{2}\right)\omega}$ yields
$$
-\int\limits_{\bd\Omega\backslash\bd M} 
e^{\left(1 - \frac{n}{2}\right)\omega}|\nabla\hat u|_{\hat g}\,d\sigma_{\hat g} =
\int\limits_\Omega |\hat u| P_{1,\hat g}(v)\,dv_{\hat g} + 
\int\limits_{\bd\Omega\cap\bd M}|\hat u| B_{\hat g}(v)\,d\sigma_{\hat g}.
$$
By the transformation laws for $P_{1, g}$ and $B_{g}$ one has $P_{1,\hat g}(v) = 
e^{-(n/2+1)\omega}P_{1,g}(1)$ and $B_{\hat g} = e^{-\frac{n}{2}\omega}B_g(1)$. Therefore one has
\begin{equation*}
\begin{split}
-\int\limits_{\bd\Omega\backslash\bd M} 
e^{\left(1 - \frac{n}{2}\right)\omega}|\nabla\hat u|_{\hat g}\,d\sigma_{\hat g} = & 
\int\limits_\Omega |e^{-(n/2-1)\omega }u| e^{-(n/2+1)\omega}P_{1, g}(1)\,e^{n\omega}dv_{g} + \\
& \int\limits_{\bd\Omega\cap\bd M}|e^{-(n/2-1)\omega } u| 
e^{-\frac{n}{2}\omega}B_{g}(1)\,e^{(n-1)\omega}d\sigma_{g}.
\end{split}
\end{equation*}
\end{proof}

\begin{remark}

Suppose there is a function $u\in \ker P_{1,g}$ (not necessarily $\ker P_{1,g}^R$!) 
such that it has a nodal domain $\Omega \Subset M$. Then by the maximum principle 
for Schr\"odinger operators, for any $(Q,f)\in \mR([g])$ the function $Q$ must change 
sign inside $\Omega$.

\end{remark}


\section{Inequalities between Dirichlet and Neumann eigenvalues}\label{sec:DNinequalities}
In the paper \cite{Friedlander}, Friedlander obtained inequalities between Dirichlet and Neumann eigenvalues of the Laplacian using the 
equality
\begin{equation}\label{Mazzeo:difference}
N_N(\lambda)-N_D(\lambda)=N_-(\cD(\lambda)),
\end{equation}
where $N_N(\lambda)$ (resp. $N_D(\lambda)$) denotes the Neumann (resp. Dirichlet) 
eigenvalue counting function,  $\cD(\lambda)$ denotes the Dirichlet-to-Neumann 
operator, 
and $N_-(\cD(\lambda))$ denotes the number of its negative eigenvalues. This equality was given a geometric interpretation by Mazzeo in \cite{Mazzeo}.

We prove a Friedlander-type result for the conformal Laplacian. Let $N_R$ and $N_D$ denote the number of negative eigenvalues for the conformal Laplacian with Robin and Dirichlet boundary conditions, respectively, and
%
recall the definition \eqref{conformal:DtoN} of the conformal Dirichlet-to-Neumann operator
$\widehat\cD_g$.

\begin{theorem}\label{prop:negative:eig:count}
$N_R - N_D = N_-(\widehat\cD_g) + \dim\ker P_{1,g}^D$.
\end{theorem}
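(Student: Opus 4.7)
The plan is to compute the Morse index of the Robin quadratic form on $H^1(M)$ via a decomposition that separates the contributions of $P_{1,g}^D$, $\widehat\cD_g$, and $\ker P_{1,g}^D$. With $c_n = \frac{n-2}{4(n-1)}$, consider the symmetric bilinear form
\[
B(u,v) = \int_M \bigl(\nabla u \cdot \nabla v + c_n R_g uv\bigr)\,dv_g + 2c_n \int_{\partial M} h_g uv\,d\sigma_g,
\]
whose associated quadratic form $Q(u) = B(u,u)$ has critical values on $H^1(M)$ (resp.\ $H^1_0(M)$) equal to the Robin (resp.\ Dirichlet) eigenvalues of $P_{1,g}$. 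By min-max, $N_R$, $N_D$, and $N_-(\widehat\cD_g)$ are the Morse indices of $Q$ on $H^1(M)$, of $Q$ on $H^1_0(M)$, and of $f \mapsto \langle f, \widehat\cD_g f\rangle_{L^2(\partial M)}$ on $S$, respectively.

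I would then work with the direct-sum decomposition
\[
H^1(M) = H^1_0(M) \oplus \widehat\cH_g(S) \oplus \mathcal{V},
\]
where $\mathcal{V}$ is a $(\dim\ker P_{1,g}^D)$-dimensional subspace of $H^1(M)$ whose trace complements $S$ in $H^{1/2}(\partial M)$; such $\mathcal{V}$ exists because $S$ has finite codimension $\dim\ker P_{1,g}^D$ in $H^{1/2}(\partial M)$ and the trace map is surjective. Three applications of Green's identity then give: (a) $B(u_0, w) = 0$ for $u_0 \in H^1_0(M)$ and $w \in \widehat\cH_g(S)$, since $P_{1,g} w = 0$ in the interior and $u_0$ vanishes on $\partial M$; (b) $Q(\widehat\cH_g(f)) = \langle f, \widehat\cD_g f\rangle$ for $f \in S$, using $Q(w) = \int_{\partial M} f\, B_g w\, d\sigma_g$ when $P_{1,g} w = 0$ together with $f \in S$ and $B_g\widehat\cH_g(f) - \widehat\cD_g f \in S^\perp$; and (c) for $\phi \in \ker P_{1,g}^D$ and $v \in H^1(M)$, $B(\phi, v) = \langle \gamma v, \partial_\nu \phi\rangle_{L^2(\partial M)}$, which restricts to a perfect pairing $\ker P_{1,g}^D \times \mathcal{V} \to \mathbb{R}$ because $\partial_\nu$ identifies $\ker P_{1,g}^D$ with $S^\perp$ while the $L^2(\partial M)$-projection of $\gamma\mathcal{V}$ onto $S^\perp$ is an isomorphism.

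From (a) and (b), on $H^1_0(M)\oplus\widehat\cH_g(S)$ the form $Q$ is the $B$-orthogonal direct sum of its two restrictions, so it has Morse index $N_D + N_-(\widehat\cD_g)$ and null space $\ker P_{1,g}^D \oplus \widehat\cH_g(\ker\widehat\cD_g)$ of dimension $\dim\ker P_{1,g}^D + \dim\ker\widehat\cD_g$. Adjoining $\mathcal{V}$: by (c), the pairing between this null space and $\mathcal{V}$ has rank exactly $\dim\ker P_{1,g}^D$, being perfect on the $\ker P_{1,g}^D$-summand and of rank at most $\dim\mathcal{V} = \dim\ker P_{1,g}^D$ overall. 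The standard signature lemma — that a nondegenerate pairing of a null subspace with a new direct summand converts the corresponding block into a hyperbolic form of signature $(\dim\ker P_{1,g}^D, \dim\ker P_{1,g}^D)$ — then yields total Morse index $N_D + N_-(\widehat\cD_g) + \dim\ker P_{1,g}^D$ and residual nullity $\dim\ker\widehat\cD_g = \dim\ker P_{1,g}^R$ for $Q$ on $H^1(M)$, the last equality coming from the remark just before the statement. Since the Morse index equals $N_R$, the theorem follows.

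The main obstacle is that the off-block pairings between $\mathcal{V}$ and the $Q$-definite eigenspaces of $P_{1,g}^D$ and of $\widehat\cD_g$ are generally nonzero, so the Gram matrix of $Q$ is not literally block-diagonal in the above decomposition. These cross terms can be eliminated by a $Q$-orthogonal adjustment of $\mathcal{V}$: for each $v \in \mathcal{V}$, subtract its $Q$-projections onto the negative and positive eigenspaces of $P_{1,g}^D$ (lying in $H^1_0(M)$) and of $\widehat\cD_g$ (lying in $\widehat\cH_g(S)$). These projections are well-defined because $Q$ is non-degenerate on each such definite eigenspace, and the adjustment keeps $\mathcal{V}$ transverse to $H^1_0(M)\oplus\widehat\cH_g(S)$ and preserves the perfect pairing (c); after the adjustment only the hyperbolic block with $\ker P_{1,g}^D$ contributes to the signature shift, giving the claim.
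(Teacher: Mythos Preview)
Your proof is correct and takes a genuinely different route from the paper's. The paper argues by a one-parameter deformation: it introduces the family $P_{1,g}(s)$ with boundary condition $B_g u + su = 0$, shows by differentiating that the eigenvalue branches $\lambda_k(s)$ are strictly increasing, observes that $P_{1,g}(0)=P_{1,g}^R$ while $\lambda_k(s)\to\lambda_k^D$ as $s\to\infty$, and hence counts $N_R - N_D - \dim\ker P_{1,g}^D$ as the number of values $s>0$ at which some $\lambda(s)$ crosses zero; finally it identifies those crossings bijectively (with multiplicity) with the negative eigenvalues of $\widehat\cD_g$. Your argument is instead a static signature computation: you decompose $H^1(M)$ as $H^1_0(M)\oplus\widehat\cH_g(S)\oplus\mathcal{V}$, use Green's identity to see that the Robin form is block-diagonal on the first two summands with indices $N_D$ and $N_-(\widehat\cD_g)$, and then show that adjoining $\mathcal{V}$ produces a hyperbolic block against $\ker P_{1,g}^D$ contributing exactly $\dim\ker P_{1,g}^D$ more to the index. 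The deformation approach is conceptually lighter and, as the paper notes, extends immediately to $N_R(\lambda)-N_D(\lambda)$ for any $\lambda$ (at the cost of conformal invariance); your approach avoids monotonicity and the Robin-to-Dirichlet limit entirely, and makes the role of $\ker P_{1,g}^D$ as a hyperbolic pairing partner for $\mathcal{V}$ completely transparent. One small point worth tightening in your write-up: the $Q$-projection of $\mathcal{V}$ onto the infinite-dimensional positive eigenspaces is well-defined because $Q$ is coercive there (equivalent to the $H^1$ inner product by the spectral gap), which you might state explicitly.
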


\begin{proof}
For each $s \in [0,\infty)$ consider the operator $P_{1,g}(s)$ defined to be the conformal Laplacian $P_{1,g}$ 
with boundary conditions
\[
	B_g u + s u = 0.
\]
This is the unbounded operator on $L^2(M)$ generated by the bilinear form
\[
	a_s(u,v) = \int_M \left(\nabla u \cdot \nabla v + \frac{n-2}{4(n-1)} R_g uv\right) dv_g + \int_{\bd M} \left(\frac{n-2}{2(n-1)} h_g + s\right) uv \, d\sigma_g
\]
with form domain $H^1(M)$. From this one sees that each $P_{1,g}(s)$ is self-adjoint and the 
family is analytic with respect to $s$.

We show that the eigenvalues of $P_{1,g}(s)$ are strictly increasing. Suppose 
$\lambda = \lambda(s)$ is an analytic curve of eigenvalues, with normalized eigenfunctions 
$u = u(s)$, so that $a_s(u,v) = \lambda \left<u,v\right>$ for all $v \in H^1(M)$. 
Substituting $v = u'$ yields 
\[
	a_s(u,u') = \lambda \left<u,u'\right>.
\]
On the other hand, differentiating and then letting $v=u$ yields
\[
	a'_s(u,u) + a_s(u',u) = \lambda \left<u',u\right> + \lambda' \|u\|^2
\]
and hence
\[
	\lambda' = a'_s(u,u) = \int_{\bd M} u^2,
\]
which is strictly positive. (If $u$ vanishes on $\bd M$, the boundary condition would imply 
that $\bd_\nu u$ also vanishes, which contradicts the unique continuation principle.)

Note that $P_{1,g}(0) = P_{1,g}^R$ has $N_R$ negative 
eigenvalues. On the other hand, the proof of \cite[Theorem 2.4]{AM1} implies that the $k$th ordered 
eigenvalue $ \lambda_k(s)$ tends to the $k$th Dirichlet eigenvalue $\lambda^D_k$ as 
$s \to \infty$. This implies $P_{1,g}(s)$ has 
$N_D + \dim\ker P_{1.g}^D$ negative eigenvalues for all sufficiently large $s$.

Therefore, the difference $N_R - N_D - \dim\ker P_{1,g}^D$ equals the number of eigenvalue curves that have 
$\lambda(s) = 0$ for some finite $s > 0$. This occurs when there is a nontrivial solution to the Robin boundary value problem
\begin{align}\label{RobinBVP}
\begin{split}
	P_{1,g} \psi = 0 \text{ in } M\\
	B_g \psi + s \psi = 0 \text{ on } \bd M.
\end{split}
\end{align}
If 0 is not a Dirichlet eigenvalue, this is equivalent to $\cD_g u + s u = 0$, where $u = \psi\big|_{\bd M}$. Moreover, the multiplicity of $\lambda =0$ as an eigenvalue of the above Robin problem equals the multiplicity of $-s$ as an eigenvalue of $\cD_g$.

When 0 is a Dirichlet eigenvalue we have to consider $\widehat\cD_g$ instead of $\cD_g$. If $\psi$ solves \eqref{RobinBVP}, it must be the case that $u = \psi\big|_{\bd M} \in S$, hence 
\[
	\widehat\cD_g u = \Pi_S B_g \psi = - s \Pi_S u = - s u.
\]
Therefore $-s$ is an eigenvalue of $\widehat\cD_g$. If $\psi_1$ and $\psi_2$ are 
linearly independent solutions to \eqref{RobinBVP}, the corresponding functions $u_1$ and 
$u_2$ on the boundary must be linearly independent as well. Otherwise, there would exist a 
linear combination $\psi = c_1 \psi_1 + c_2 \psi_2$ with $P_{1,g} \psi = 0$, $\psi\big|_{\bd M} = 0$, 
and $B_g \psi = 0$, hence $\bd_\nu \psi = 0$, which is only possible if $\psi \equiv 0$, a 
contradiction. Therefore, the multiplicity of $-s$ is at least as large as the multiplicity of 
$\lambda=0$ for \eqref{RobinBVP}.

Conversely, if $-s$ is an eigenvalue of $\widehat\cD_g$, with eigenfunction $u$, 
there exists a function $\psi$ on $M$ such that $P_{1,g} \psi = 0$ and $\Pi_S B_g \psi = - 
s u$. Given $\psi$, there exists a unique $v \in \ker P_{1,g}^D$ so that $B_g(\psi + v) 
\in S$. Then $P_{1,g}(\psi + v) = 0$ 
and
\[
	B_g(\psi + v) = \Pi_S B_g(\psi + v) = \Pi_S B_g(\psi) = - s \psi\big|_{\bd M} = 
	- s(\psi + v)\big|_{\bd M},
\]
hence 0 is an eigenvalue of \eqref{RobinBVP}, with eigenfunction $\psi + v$.

Suppose $u_1$ and $u_2$ are linearly independent eigenfunctions of $\widehat\cD_g$, 
with the same eigenvalue $-s$. If the corresponding eigenfunctions $\psi_i + v_i$ 
of \eqref{RobinBVP} are not linearly independent, there will exist constants $c_1$ and $c_2$ 
so that $c_1(\psi_1 + v_1) + c_2 (\psi_2 + v_2) = 0$.
Restricting to the boundary yields $c_1 u_1 + c_2 u_2 = 0$, a contradiction. This 
proves that the multiplicity of $\lambda =0$ for \eqref{RobinBVP} equals the multiplicity 
of $-s$ for $\widehat\cD_g$, and thus completes the proof.
\end{proof}

\begin{remark}
Repeating the above argument with $P_{1,g}$ replaced by $P_{1,g} - \lambda$, we obtain a similar equality in terms of the counting functions $N_R(\lambda)$ and $N_D(\lambda)$ for the conformal Laplacian. However, these quantities are only conformally invariant when $\lambda=0$.
\end{remark}


\section{A continuous family of GJMS operators and their conformal extensions to 
Poincare-Einstein manifolds}\label{sec:Poincare:Einstein}

In this section, we put the previous results in a much more general framework which has been developed in recent 
years. First we consider higher order extensions of the conformal Laplacian, i.e. the so-called GJMS operators 
\cite{GJMS}. The original construction of these operators did not provide useful information on their 
analytic properties, due to the nature itself of the construction. The important result on their analytic properties  
came with the paper by Graham and Zworski \cite{GZ}.   
Roughly speaking, they prove that if $(M^n,[g])$ is a smooth compact manifold
endowed with a conformal structure, then the GJMS operators $P_k$ can be realized as 
residues at the values $\gamma = k$ of
the meromorphic family $S(n/2 + \gamma)$ of scattering operators associated to the Laplacian 
on any Poincar\'e-Einstein manifold
$(X,G)$ for which $(M^n,[g])$ is the conformal infinity.   These are the `trivial' poles of the scattering 
operator $S(s)$; typically $S(s)$ has infinitely many 
other poles, which are
called resonances. In this geometric framework, 
multiplying this scattering family by some regularizing factor, one obtains a holomorphic 
family of
elliptic pseudodifferential operators denoted $P_\gamma^{g}$ (which patently depends on the filling 
$(X,G)$) for $\gamma \in (0,n/2)$. This realization of the GJMS operators (and their continuous in $\gamma$ counterpart) has led to 
important new understanding of them, including, for example, the basic fact that
$P_\gamma^{g}$ is symmetric with respect to $dv_{g}$. Hence even though the family $P_\gamma^{g}$ is not entirely canonically 
associated to $(M,[g])$, its study can still illuminate the truly canonical operators which occur 
as special values, i.e. the GJMS operators.

Let us sum up the main properties of these operators: $P^g_0 = \mathrm{Id}$, and more generally, $P^g_k$ 
is the $k^{\mathrm{th}}$ GJMS operator; $P^g_\gamma$ is a classical elliptic pseudodifferential operator 
of order $2\gamma$
with principal symbol $\sigma_{2\gamma}(P_\gamma^{g}) = |\xi|^{2\gamma}_{g}$; $P^g_\gamma$
is Fredholm on $L^2$ when $\gamma > 0$; if $P^g_\gamma$ is invertible, then 
$P^g_{-\gamma} = (P^g_{\gamma})^{-1}$; and, most importantly,
\begin{equation}
\mbox{if}\ \hat g=u^{\frac{4}{n-2\gamma}} g, \qquad \mbox{then}\ 
P_\gamma^{g} (uf) = u^{\frac{n+2\gamma}{n-2\gamma}} P_\gamma^{\hat g} (f)
\label{eq:ccfcL}
\end{equation}
for any smooth function $f$. Generalizing the formul\ae\ for scalar curvature ($\gamma = 1$) 
and the Paneitz-Branson
$Q$-curvature ($\gamma = 2$), for any 
$0 < \gamma < n/2$ we define the
$Q$-curvature of order $\gamma$ associated to a metric $g$ by
\begin{equation}
Q_\gamma^{g} = P_\gamma^{g}(1).
\label{eq:Qgamma}
\end{equation}

It is then natural to define a ``generalized Yamabe problem": given a metric $g$  on a compact
manifold $M$, find $u > 0$ so that if $\hat g = u^{4/(n-2\gamma)} g$, then $Q_\gamma^{\hat g}$ is 
constant. This amounts to solving
\begin{equation}\label{fracYamabe}
P_\gamma^{g} u = Q_\gamma^{\hat g} u^{\frac{n+2\gamma}{n-2\gamma}} ,\quad u>0,
\end{equation}
for $Q_\gamma^{\hat g} = \mbox{const.}$  From the analytic point of view, this problem exhibits 
the same features as the standard Yamabe problem (lack of compactness, for instance) with 
the additional major difficulty that the equation involves a pseudodifferential operator. 

In the last decade or so, several works have studied this generalized Yamabe problem. 
The first major tool is a result of Chang and Gonzalez \cite{Chang:Gonzalez} realizing the operators $P_\gamma^g$ for 
$\gamma \in (0,1)$ as boundary operators, i.e. Dirichlet-to-Neumann type operators. It relies 
heavily on the original construction based on scattering of Graham and Zworski, reformulating it in a suitable form. 
However, a drawback of the Chang-Gonzalez result is that the operator defined on the 
Poincar\'e-Einstein manifold $(X,G)$
is {\sl not} conformally covariant except in the case $\gamma=1/2$, which corresponds 
precisely to the Escobar problem we considered in the previous sections. This was later resolved by 
Case and Chang \cite{CaseChang},
where they reinterpret the scattering theory of Graham and Zworski in terms of smooth 
metric measure spaces. We now introduce this framework. 

\subsection*{Smooth metric measure spaces and conformally covariant operators of 
order $2\gamma$ with $\gamma \in (0,1)$} We will refrain from going deeply in the 
theory of Case and Chang since it is quite technical and would obscure our goal of applying their construction 
to the spectral geometry of their operators. Notice first that the most 
popular range of powers is $\gamma \in (0,1)$ since in this case the problem enjoys a 
standard maximum principle. However, as already noted, the case $\gamma=2$ is also of major importance since 
it corresponds to the Paneitz-Branson operator. We will thus focus on the range $\gamma \in (0,1)$, 
but several results mentioned 
in this section can be obtained for a wider range (under appropriate assumptions). We now describe their construction. 
Notice that we change slightly the notations below to be consistent with the previous discussion starting this section. 

A smooth metric measure space is a four-tuple $(\overline{X}^{n+1},G,v^m \text{dvol},\mu)$ 
determined by a Riemannian manifold $(\overline{X},G)$ with boundary $M=\partial X$, the 
Riemannian volume element $\text{dvol}$ associated to $G$, a nonnegative function 
$v \in C^\infty(\overline X)$ with $v^{-1}(0)=M$ and constants $m \in \mathbb R 
\backslash \left \{ 1-n \right \}$ and $\mu \in \mathbb R$. In the interior 
of $\overline X$ we define a smooth function $\phi$ by 
$$
v^m=e^{-\phi}.
$$
We say that $(\overline{X}^{n+1},G,v^m \text{dvol}_G,\mu)$ and 
$(\overline{X}^{n+1},\hat G,{\hat v}^m \text{dvol}_{\hat G},\mu)$ are pointwise conformally 
equivalent if there exists a smooth function $u$ such that 
$$
\hat g=u^{-2}g
$$ 
 and
 $$
 \hat v= u^{-1}v.
 $$
 We then define the weighted Laplacian (in relation to the 
 Bakry-Emery Ricci tensor) and the weighted 
GJMS operator of order $2$ by
$$
\Delta_\phi=\Delta-\nabla \phi
$$
and 
$$
L^m_{2,\phi}=-\Delta_\phi +\frac{m+n-1}{2} J^m_\phi,
$$
where 
$$
J^m_\phi=\frac{1}{2(m+n)} (R-2m v^{-1}\Delta v-m-(m-1)v^{-2}|\nabla v|^2+m\mu v^{-2}). 
$$
The important property for us is conformality in the following sense: if $\hat g =u^{-2}g $ and 
$\hat v =u^{-1} v$, then 
$$
\widehat{L^m_{2,\phi}} w=u^{(m+n+3)/2} L^m_{2,\phi}(u^{-(m+n-1)/2}w)
$$
for every smooth $w$. We can now state the main theorem (see \cite{CaseChang}, 
Theorem 4.1 and Lemma 6.1) in the case $\gamma \in (0,1)$.
\begin{theorem}
Let $(X^{n+1}, M^n,G_+)$ be a Poincare-Einstein manifold and $\gamma \in (0,1)$. Suppose that 
$\lambda_1(-\Delta_{G_+})>\frac{n^2}{4}-\gamma^2$ and set $m_0=1-2\gamma$. Fix a representative $h$ of
 the conformal boundary and let $r$ be the geodesic defining function associated to $h$. Then there exists a unique defining function $\rho$ such that  
$$\rho=r+c_\gamma Q_{\gamma}r^{1+2\gamma}+O(r^3)$$
where $c_\gamma$ is just a normalizing constant and 
$$
J^{m_0}_{\phi_0}=0. 
$$

Consider now the smooth metric measure space $(\overline{X}^{n+1}, 
G:=\rho^2 G_+,\rho^{m_0}\text{dvol},m_0-1).$ Given $f$ smooth, the function $U$ is the solution of 
\begin{equation}\label{Dirichlet:scat}
\left\{
\begin{aligned}
L^{m_0}_{2,\phi_0} U &=0, \quad x\in X\\
U &= f, \quad x\in M
\end{aligned}
\right.
\end{equation}
if and only if $u=\rho^{n-s} U$ is the solution of 
\begin{equation}\label{Dirichlet:ext}
\left\{
\begin{array}{c}
-\Delta_{G_+}u-s(n-s) u =0, \quad x\in X\\
u=Fr^{n-s}+Gr^s\\
F|_{r=0} = f, \quad x\in M
\end{array}
\right.
\end{equation}
for $s=\frac{n}{2}+\gamma$. Moreover $U$ is such that 
$$
P_{\gamma} f -\frac{n-2\gamma}{2}c_\gamma Q_{\gamma} f=\frac{c_\gamma}{2\gamma} \lim_{\rho \to 0} 
\rho^{m_0} \partial_\rho U. 
$$
\end{theorem}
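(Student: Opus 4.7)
The plan is to decompose the theorem into three ingredients, following the Case--Chang construction. First, I construct the special defining function $\rho$ as a perturbation of the geodesic defining function $r$: writing $\rho = r e^{\omega}$ with $\omega|_M = 0$ and substituting into the formula for $J^{m_0}_{\phi_0}$, the Einstein condition on $G_+$ reduces the requirement $J^{m_0}_{\phi_0} \equiv 0$ to a first-order transport ODE for $\omega$ along the $\nabla_{G_+} r$-flow. Standard ODE theory on a collar gives existence and uniqueness of a smooth $\omega$; a Taylor expansion in $r$ then produces the asymptotic $\rho = r + c_\gamma Q_\gamma r^{1+2\gamma} + O(r^3)$, with $Q_\gamma$ appearing in the $r^{1+2\gamma}$-coefficient precisely because, by the Graham--Zworski construction, the leading nontrivial coefficient of the scattering solution with Dirichlet data $f \equiv 1$ is $c_\gamma Q_\gamma$.

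Next, for the equivalence of the two boundary value problems, I exploit the conformal covariance of $L^{m_0}_{2,\phi_0}$ stated earlier in the section. The conformal change $G \mapsto G_+ = \rho^{-2} G$ paired with the rescaling $v \mapsto \rho^{-1} v = 1$ trivializes the weight on $(X, G_+)$, so the weighted Laplacian there reduces to the ordinary Laplacian $\Delta_{G_+}$. Applying the covariance law with conformal factor $\rho$, namely $\widehat{L^{m_0}_{2,\phi_0}} w = \rho^{(m_0+n+3)/2} L^{m_0}_{2,\phi_0}\bigl(\rho^{-(m_0+n-1)/2} w\bigr)$, and setting $w = \rho^{n-s} U =: u$ (which is consistent because $(m_0+n-1)/2 = (n-2\gamma)/2 = n-s$), the equation $L^{m_0}_{2,\phi_0} U = 0$ is converted into $\bigl(-\Delta_{G_+} + \tfrac{m_0+n-1}{2} J^{m_0}_{G_+}\bigr) u = 0$. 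On the Einstein background the constant $\tfrac{m_0+n-1}{2} J^{m_0}_{G_+}$ collapses to $-s(n-s)$, and the Dirichlet condition $U|_M = f$ matches the leading asymptotic $u \sim f\, r^{n-s}$, yielding \eqref{Dirichlet:ext}. Reversing the computation shows every scattering solution arises from a unique $U$.

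For the final identification of $P_\gamma$, I substitute the standard Graham--Zworski expansion $u = F r^{n-s} + G r^s$ into $U = \rho^{s-n} u$, multiply by $\rho^{m_0}$, and take $\rho \to 0$. Because $\rho = r + c_\gamma Q_\gamma r^{1+2\gamma} + O(r^3)$, the $F r^{n-s}$-term produces exactly the $\tfrac{n-2\gamma}{2} c_\gamma Q_\gamma f$ correction, while the $G r^s$-term contributes $2\gamma\, G|_M$ up to the normalizing constant; combined with the scattering-theoretic identity $P_\gamma f = c_\gamma^{-1} G|_M$, this gives the stated formula. The main obstacle will be the algebraic collapse in the previous step: the fact that $\tfrac{m_0+n-1}{2} J^{m_0}_{G_+}$ reduces cleanly to $-s(n-s)$ on the Einstein background is not automatic but rather forces the choices $m_0 = 1-2\gamma$, $\mu = m_0 - 1$, and, most delicately, the identity $J^{m_0}_{\phi_0} \equiv 0$ throughout $X$ (not merely at leading order) from the first step. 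Tracking the interplay between weight, conformal factor, and curvature terms is unavoidable; the spectral hypothesis $\lambda_1(-\Delta_{G_+}) > n^2/4 - \gamma^2$ enters in the background to guarantee existence and uniqueness of the scattering solution with the claimed asymptotic structure.
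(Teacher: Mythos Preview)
The paper does not supply a proof of this theorem: it is stated as a quotation of \cite{CaseChang}, Theorem~4.1 and Lemma~6.1, with the surrounding text serving only to set up notation and motivation. So there is no ``paper's own proof'' to compare against.

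That said, your outline is a faithful reconstruction of the Case--Chang argument and the computations check out. In particular, your identity $(m_0+n-1)/2 = n-s$ is correct, and the collapse $\tfrac{m_0+n-1}{2}J^{m_0}_{G_+} = -s(n-s)$ does hold on the Einstein background once one uses $R_{G_+}=-n(n+1)$ together with $\mu = m_0-1$ (indeed $-n(n+1)+m_0(m_0-1) = -(n+1-2\gamma)(n+2\gamma)$, so the factor $n+1-2\gamma = m_0+n$ cancels). The places where your sketch is thin --- the reduction of $J^{m_0}_{\phi_0}=0$ to a first-order ODE in $r$, and the extraction of the $r^{1+2\gamma}$ coefficient as $c_\gamma Q_\gamma$ --- are precisely the content of Lemma~6.1 and Theorem~4.1 in \cite{CaseChang}, so citing them there would be appropriate. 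Your invocation of the spectral gap $\lambda_1(-\Delta_{G_+})>n^2/4-\gamma^2$ for the well-posedness of the scattering expansion is also the correct role for that hypothesis, via \cite{GZ}.
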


Applying the previous theorem, the generalized Yamabe problem \eqref{fracYamabe} 
for $\gamma \in (0,1)$ is equivalent to solving the BVP 
 
\begin{equation}\label{Dirichlet:scat:2}
\left\{
\begin{aligned}
-\Delta_{\phi_0} U &=0, \quad x\in X\\
\lim_{\rho \to 0}\rho^{1-2\gamma} \partial_\rho U-\frac{n-2\gamma}{2}c_\gamma Q_{\gamma} U &= 
d_\gamma U^{\frac{n+2\gamma}{n-2\gamma}}, \quad x\in M
\end{aligned}
\right.
\end{equation}
for some harmless constants $c_\gamma$ and $d_\gamma$ (with signs as defined in \cite{CaseChang}). 
For $\gamma=1/2$ this is exactly the Escobar problem \cite{Escobar:1992:Annals}. Notice that by Hopf's 
lemma (see \cite{CaseChang}), one can prove that the problem 
\eqref{Dirichlet:scat:2} admits a strong maximum principle. The  point of the BVP \eqref{Dirichlet:scat:2} is that, by integration by parts on the weighted smooth 
space and the conformality of the operators, one can deduce obstructions by nodal domains 
for curvature prescription problems in exactly the same way as in section \ref{sec:curvature:prescription}. Similarly, one can define a version of the conformal Robin BVP and obtain analogues of the whole set of previous results, adapting 
straightforwardly the same computations.  In particular, having in mind the program for the conformal Laplacian 
on manifolds with boundary, one could study the analogue of the Kazdan-Warner trichotomy. 

The previous construction is developed here in the case $\gamma \in (0,1)$. Case and Chang used 
the setting of smooth metric measure spaces to give the analogue of the previous theorem for the {\sl whole} family 
of operators $P^g_\gamma$. However, in that case the formulas are much more involved and it starts being cumbersome 
to report  the results in this section. 


\section{Further questions}\label{sec:further:questions}

In this section, we formulate several open problems that seem interesting to the 
authors, continuing the list in \cite[\S 7]{CGJP2}.

\begin{itemize}
\item[(A)] We showed in section \ref{sec:no:zero} that generically, $0$ is not an 
eigenvalue of the conformal Laplacian on manifolds with boundary, for the natural boundary 
value problems considered in this paper.  The following  
is a natural analogue of Conjecture C in \cite[\S 7.3]{CGJP1} for manifolds with boundary: 
among metrics where $0$ {\em is} an eigenvalue of the conformal Laplacian, generically 
its multiplicity is equal to $1$.  
\item[(B)] Relative conformal invariants: Can the results in \cite{Schippers} 
be generalized from dimension $2$ to higher dimensions?  
\end{itemize}

It is known that on some closed manifolds (\cite{GL,Ros06}) the 
space of Yamabe-positive metrics can have infinitely many connected components.
On the other hand, the space of Yamabe-negative metrics is connected and has trivial 
homotopy groups (\cite{Lo92,Kat}).  The number of negative eigenvalues of the 
conformal Laplacian for one of the conformally invariant boundary value problems 
considered in section \ref{sec:BVP} provides a natural grading of the space of 
Yamabe-negative metrics; this leads naturally to Problem K raised in \cite[\S 7]{CGJP2}.  
Below, we formulate its natural counterpart  for manifolds with boundary.  

\begin{itemize}
\item[(C)]  Let $[g_0]$ and $[g_1]$ 
be two conformal classes of metrics on $(M,\bd M)$ such that $P_{g_0}$ and $P_{g_1}$ have 
at least $k$ negative eigenvalues for one of the conformally invariant boundary value problems 
considered in section \ref{sec:BVP}.  Is it possible to connect $g_0$ and $g_1$ by a curve of metrics $g_t$ 
such that $P_{g_t}$ has at least $k$ negative eigenvalues for the corresponding boundary 
value problems?
\end{itemize}


\section*{Acknowledgements}
The authors would like to thank Asma Hassannezhad and A. Rod Gover for stimulating discussions and 
very interesting remarks about preliminary versions of this paper.  In addition, the authors would like to thank  
Ailana Fraser, Pengfei Guan and Richard Schoen for useful discussions.  The authors would like to thank 
BIRS, CRM, Oberwolfach, McGill and Johns Hopkins University for their hospitality.  
 

\end{document}